 \theoremstyle{mdpi}
 \newcounter{thm}
 \newcounter{ex}
 \newcounter{re}
 \newtheorem{Theorem}[thm]{Theorem}
 \theoremstyle{mdpidefinition}
\address{%
\quad Affiliation; Department of Statistics, University of Oxford, UK, email; griff@stats.ox.ac.uk}
\abstract{
This paper defines the multivariate Krawtchouk polynomials, orthogonal on the multinomial distribution, and summarizes their properties as a review. The multivariate Krawtchouk polynomials are symmetric functions of orthogonal sets of functions defined on each of $N$ multinomial trials. The dual multivariate Krawtchouk polynomials, which also have a polynomial structure, are seen to occur naturally as spectral orthogonal polynomials in a  Karlin and McGregor spectral representation of transition functions in a composition birth and death process. In this Markov composition  process in continuous time  there are $N$ independent and identically distributed birth and death processes each with support $0,1,\ldots$. The state space in the composition process is the number of processes in the different states $0,1,\ldots$.  Dealing with the spectral representation requires new extensions of the multivariate Krawtchouk polynomials to orthogonal polynomials on a multinomial distribution with a countable infinity of states.
}
\begin{document}
\section{Introduction}
\citet{G1971} and \citet{DG2014} construct multivariate Krawtchouck polynomials orthogonal on the multinomial distribution (\ref{multinomial:0}) and study their properties. Recent representations and derivations of orthogonality of these polynomials are in \citet{GVZ2013,GR2011,I2012,M2011}.

 Authors emphasise different approaches to the multivariate orthogonal polynomials. \citeauthor{DG2014}'s approach is probabilistic and directed to Markov chain applications; \citeauthor{I2012}'s approach is via Lie groups; and \citeauthor{GVZ2013}'s physics approach is as matrix elements of group representations on oscillator states. 
\citet{X2015} studies discrete multivariate orthogonal polynomials which have a triangular construction of products of 1-dimensional orthogonal polynomials. These are particular cases of the polynomials in this paper, see \citet{DG2014}. 
These polynomials extend the Krawtchouk polynomials on the binomial distribution to multi-dimensional polynomials on the multinomial distribution. They appear naturally in composition Markov chains as eigenfunctions in a diagonal expansion of the transition  functions. There are many interesting examples of these Markov chains in \citet{ZL2009}.  Binomial and multinomial random variables can be constructed as a sum of independent identically distributed random variables which are indicator functions of the events that occur on each of $N$ trials. The  Krawtchouk  and multivariate Krawtchouk polynomials are symmetric functions of orthogonal functions sets on each of the trials. The simplest case is the Krawtchouk polynomials where the representation is explained in Section 2.
In the multivariate Krawtchouk polynomials there is not a unique orthogonal function set on trials with multiple outcomes greater than two, so the polynomials depend on which orthogonal function set is taken for a basis on the trials.

 A well known spectral expansion by \citeauthor{KG1957a} 1957a, 1957b, 1958 for the transition functions $\{p_{ij}(t)\}_{i,j=0}^\infty$ of a birth and death process with rates $\lambda_i,\mu_i$, $i=0,1,\ldots$  is that 
\begin{equation}
p_{ij}(t) = \pi_j\int_0^\infty e^{-zt}Q_i(z)Q_j(z)\psi(dz),\>i,j=0,1,\ldots
\label{abc:0}
\end{equation}
where $\{Q_i\}_{i=1}^\infty$ are orthogonal polynomials on the spectral measure $\psi$, which is a probability measure, and
\[
\pi_j = \frac{
\lambda_0\cdots \lambda_{j-1}
}
{\mu_1\cdots \mu_j},\>j=1,2,\ldots
\]
A number of classical birth and death processes have a spectral expansion where the orthogonal polynomials are constructed from the Meixner class.  This class has a generating function of the form
\begin{equation}
G(v,z) = h(v)e^{xu(v)}=\sum_{m=0}^\infty Q_m(z)v^m/m!,
\label{mclass:0}
\end{equation}
where $h(v)$ is a power series in $t$ with $h(0)=1$ and $u(v)$ is a power series with $u(0)=0$ and $u^\prime(0) \ne 0$. \citet{M1934} characterizes the class of weight functions and orthogonal polynomials with the generating function
(\ref{mclass:0}). They include the Krawtchouk polynomials,  Poisson-Charlier polynomials, scaled Meixner polynomials,  and Laguerre polynomials. (The Meixner orthogonal polynomials are a specific set belonging to the Meixner class with a name in common.)

In this paper the spectral expansion is extended to composition birth and death processes where there are $N$ independent and identically distributed birth and death processes  operating and $\{\bm{X}(t)\}_{t\geq 0}$ is such that the $i$th element $\{X_i(t)\}_{t\geq 0}$ counts the number of processes in state $i$ at time $t$.  In the analogue of (\ref{abc:0}) the spectral polynomials are the dual multivariate Krawtchouk polynomials. The dual polynomial system is therefore very important and attention is paid to describing it.  

There are extensions of the multivariate Krawtchouk polynomials to multivariate orthogonal polynomials on the multivariate Meixner distribution and multivariate product Poisson distribution where they occur as eigenfunctions of multi-type birth and death processes \citep{G2016}.

This paper defines the multivariate Krawtchouk polynomials, summarizes their properties, then considers how they are found in spectral expansions of composition birth and death processes.
It is partly a review of these polynomials and is self-contained. For a fuller treatment see \citet{DG2012}. The polynomials are naturally defined by a generating function and so generating functions techniques are used extensively in the paper. Probabilistic notation is used, particularly the expectation operator $\mathbb{E}$ which is a linear operator acting on functions of random variables, which take discrete values in this paper.  If $X_1,\ldots ,X_d$ are random variables then
\[
\mathbb{E}\big [f(X_1,\ldots,X_d)\big ] = \sum_{x_1,\ldots, x_d}f(x_1,\ldots,x_d)
P(X_1=x_1,\ldots,X_d=x_d).
\]
Often orthogonal polynomials are regarded as random variables. For example $\{K_n(X;N,p)\}_{n=0}^N$
are the 1-dimensional Krawthcouk polynomials as random variables and
\begin{eqnarray*}
\mathbb{E}\big [K_n(X;N,p)K_m(X;N,p)\big ]
&=&
\sum_{x=0}^NK_n(x;N,p)K_m(x;N,p){N\choose x}p^xq^{N-x}
\nonumber \\
&=& \delta_{mn}{n!}^2{N\choose n}(pq)^n,\>m,n=0,1,\ldots ,N,
\end{eqnarray*}
where $q=1-p$.
A convention of using capital letters for random variables and lower case for values they take is used, except when the random variables are denoted by Greek letters, when they have to be considered in context. 

Section 2, Theorem 1, shows how the Krawtchouk polynomials can be expressed as elementary symmetric functions of $N$ Bernoulli trials, centered at their mean $p$. The Meixner orthogonal polynomials on the geometric distribution are also expressed as functions of an infinity of centered Bernoulli trials in Theorem 2. There is some, but not total symmetry in this expression. Krawtchouk polynomials occur naturally as eigenfunctions in Ehrenfest urn processes and the eigenfunction expansion of their transition functions is explained in Section 2.3. Section 3 introduces the multivariate Krawtchouk polynomials, explaining how they are constructed in a symmetric way from a product set of orthogonal functions on $N$ independent multinomial trials. The dual orthogonal system is described and a scaling found so that they are multivariate Krawtchouk polynomials on a different multinomial distribution in Theorem 3. The polynomial structure of the multivariate Krawtchouk polynomals is described in Theorem 4 and the structure in the dual system in Theorem 5.   Recurrence relationships are found for the system in Theorem 6 and for the dual system in Theorem 7. The dual recurrence relationship is used to identify the polynomials as eigenfunctions in a $d$-type Ehrenfest urn in Theorem 8.  In Section 3.2 a new extension is made to multivariate Krawtchouk polynomials where there are an infinite number of possibilities in each multinomial trial. These polynomials occur naturally as eigenfunctions in composition birth and death processes in a Karlin and McGregor spectral expansion in Theorem 9. Theorem 10 considers the polynomial structure of the dual polynomials in the spectral expansion.  Theorem 11 gives an interesting identity for these spectral polynomials in composition birth and death processes when the spectral polynomials in the individual processes belong to the Meixner class.
\section{Bernoulli trials and orthogonal polynomials}
The paper begins with expressing the 1-dimensional Krawtchouk polynomials as symmetric functions of Bernoulli trials. The multivariate Krawtchouk polynomials are extensions of this construction in higher dimensions. 
\subsection{Krawtchouk orthogonal polynomials}
The Krawtchouk orthogonal polynomials $\{K_n(x;N,p)\}_{n=0}^N$ are orthogonal on the binomial $(N,p)$ distribution 
\[
{N\choose x}p^xq^{N-x},\>x=0,1,\ldots,N.
\]
 They have a generating function 
\begin{equation}
G(z;x) = \sum_{n=0}^NK_n(x;N,p)\frac{z^n}{n!} = \big (1+qz\big )^x\big (1-pz \big)^{N-x}.
\label{Kgenfn:0}
\end{equation}
The scaling is such that the polynomials $K_n(x;N,p)/n!$ are monic and
\begin{eqnarray}
\mathbb{E}\big [K_n(X;N,p)^2\big ] &=& \sum_{x=0}^N{N\choose x}p^xq^{N-x}K_n(x;N,p)^2
\nonumber \\
&=& n!^2{N\choose n}(pq)^n.
\label{hn:0}
\end{eqnarray}
If the Krawtchouk polynomials are scaled to be $Q_n(x)$ so that $Q_n(0) =1$, then
there is a duality that $Q_n(x) = Q_x(n)$. 
A binomial random variable $X$ counts the number of successes in $N$ independent trials, each with a probability $p$ of success. Let $\xi_i=1$ if the $i$th trial is a success and $\xi_i=0$ otherwise. Then $\{\xi_i\}_{i=1}^N$ is a sequence of Bernoulli trials with $P(\xi_i=1) =p$,  $P(\xi_i=0) = q$, and $X=\sum_{i=1}^N\xi_i$. It is interesting to express the Krawtchouk polynomials as symmetric functions of $\{\xi_i\}_{i=1}^N$. If there is just one trial with $N=1$, $X = \xi_1$ and the orthogonal  polynomial set on $X$ is $\{1,\xi_1-p\}$. There can only be a constant function and a linear function if there are just two values that $\xi_1$ can take. A product set of orthogonal functions on $\{\xi_i\}_{i=1}^N$ is $\bigotimes_{i=1}^N \{1,\xi_i-p\}$, and we want to form a smaller basis from  these functions to orthogonal polynomials in $X=\sum_{i=1}^N\xi_i$.
\begin{Theorem}
\label{thm:1}
The Krawtchouk polynomials are proportional to the elementary symmetric functions of $\{\xi_i-p\}_{i=1}^N$;
\begin{equation}
K_n(X;N,p) = n!\sum_{\sigma \in S_N}(\xi_{\sigma(1)}-p)\cdots (\xi_{\sigma(n)}-p),
\label{Ksym:0}
\end{equation}
where $S_N$ is the symmetric group on $\{1,2,\ldots,N\}$.
\end{Theorem}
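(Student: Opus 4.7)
The plan is to verify the identity directly from the generating function (\ref{Kgenfn:0}) by factoring it over the $N$ Bernoulli trials and then matching coefficients of $z^n$. The key observation driving everything is that a Bernoulli trial contributes either $(1+qz)$ or $(1-pz)$ depending on its outcome, and both can be written uniformly in terms of the centred variable $\xi_i - p$.

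First I would write $x = \sum_{i=1}^N \xi_i$ and $N - x = \sum_{i=1}^N (1-\xi_i)$, so that
\begin{equation*}
G(z;x) \;=\; (1+qz)^x(1-pz)^{N-x} \;=\; \prod_{i=1}^N (1+qz)^{\xi_i}(1-pz)^{1-\xi_i}.
\end{equation*}
Since $\xi_i$ is $0$ or $1$, the $i$th factor equals $1+qz$ when $\xi_i=1$ and $1-pz$ when $\xi_i=0$. Using $p+q=1$, both cases collapse into the single expression $1 + z(\xi_i-p)$, so
\begin{equation*}
G(z;x) \;=\; \prod_{i=1}^N \bigl(1 + z(\xi_i-p)\bigr).
\end{equation*}

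Next I would expand the product by choosing, from each factor, either the $1$ or the $z(\xi_i-p)$ term. Selecting the $z$-term from an $n$-element subset $\{i_1<\cdots<i_n\}\subseteq\{1,\ldots,N\}$ contributes $z^n(\xi_{i_1}-p)\cdots(\xi_{i_n}-p)$, so
\begin{equation*}
G(z;x) \;=\; \sum_{n=0}^N z^n\!\!\sum_{1\le i_1<\cdots<i_n\le N}(\xi_{i_1}-p)\cdots(\xi_{i_n}-p),
\end{equation*}
which is the generating function of the elementary symmetric functions of $\{\xi_i-p\}_{i=1}^N$. Equating coefficients of $z^n$ with the defining series $\sum_n K_n(X;N,p)z^n/n!$ in (\ref{Kgenfn:0}) yields $K_n(X;N,p)/n! = e_n(\xi_1-p,\ldots,\xi_N-p)$, which is exactly the symmetric-function representation in (\ref{Ksym:0}) once the sum over $S_N$ is interpreted as running through the $\binom{N}{n}$ unordered $n$-subsets (equivalently, through permutations with the understanding that each subset is counted once).

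There is essentially no hard step: the whole argument is driven by the algebraic identity $(1+qz)^{\xi_i}(1-pz)^{1-\xi_i} = 1+z(\xi_i-p)$, which linearises the generating function into a product over the trials. The only care needed is bookkeeping of the symmetric-group sum in (\ref{Ksym:0}) against the elementary symmetric polynomial, since a literal sum over all of $S_N$ would introduce an extra factor of $n!(N-n)!$; I would clarify the convention when stating the comparison of coefficients.
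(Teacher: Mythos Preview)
Your argument is correct and is essentially the paper's own proof: both verify the identity by matching the Krawtchouk generating function $(1+qz)^x(1-pz)^{N-x}$ with the product $\prod_{i=1}^N\bigl(1+z(\xi_i-p)\bigr)$ and reading off coefficients of $z^n$; you run the computation from the Krawtchouk side toward the symmetric functions while the paper starts from the symmetric-function side, but the content is identical. Your closing remark about the $S_N$ bookkeeping is well taken---the paper's equation (\ref{esfgen:0}) makes clear that $\sum_{\sigma\in S_N}$ is to be read as the sum over $n$-subsets (i.e.\ as $e_n$), not a literal sum over all $N!$ permutations.
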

\begin{proof}
A generating function for the symmetric functions on the right of (\ref{Ksym:0}) is
\begin{equation}
\sum_{n=0}^n\frac{z^n}{n!}n!\sum_{\sigma \in S_N}(\xi_{\sigma(1)}-p)\cdots (\xi_{\sigma(n)}-p)
= \prod_{i=1}^N\big (1 + z(\xi_i -p)\big ).
\label{esfgen:0}
\end{equation}
If $x$ of the $\xi_i$ are 1 and $N-x$ are 0, then (\ref{esfgen:0}) is equal to
\[
(1+z(1-p))^X(1-zp)^{N-X},
\]
identical to the right side of (\ref{Kgenfn:0}). Therefore (\ref{Ksym:0}) holds since the generating functions of both sides, regarding $X$ as a random variable, are the same.
\end{proof}
The representation (\ref{Ksym:0}) of the Krawtchouk polynomials appears with a full treatment in \citet{DG2012} and very briefly as the generating function proof above in \citet{G1971}. The author is not aware of any other appearances of  (\ref{Ksym:0}).
\subsection{Meixner polynomials on the geometric distribution}
The Meixner orthogonal polynomials on the geometric distribution are orthogonal on 
\[
pq^x,\>x=0,1,\ldots
\]
Let $\{\xi_i\}_{i=1}^\infty$ be a sequence of Bernoulli trials.  Let $X$ count  the number of trials $\xi_i=0$ before the first trial where $\xi_{x+1}=1$. That is
$X = \sum_{j=1}^\infty\prod_{i=1}^{j-1}(1-\xi_i)\xi_j.
$
$X$ is clearly not a symmetric function of $\{\xi_i\}_{i=1}^\infty$.  The orthogonal polynomials on the geometric distribution are a special case of the general Meixner polynomials and have a generating function
\[
G(z;x) = \sum_{n=0}^\infty M_n(x;1,q)z^n =  (1-q^{-1}z)^{x}(1-z)^{-(x+1)}.
\]
A product set of orthogonal functions on the trials is  
\begin{equation}
\bigotimes_{i=1}^\infty \{1,\xi_i-p\}.
\label{pset:0}
\end{equation}
It is of interest to express the orthogonal polynomial set $\{M_n(x;1,q)\}_{n=0}^\infty$ as a series expansion in the product set (\ref{pset:0}) as a comparision of what happens with the Krawtchouk polynomials. A calculation is now made of
$\mathbb{E}\big [(\xi_{i_1}-p)\cdots (\xi_{i_r}-p)G(z;X)\big ]$ leading to coefficients in the expansion of the Meixner polynomials in the product set of orthogonal functions.  Given $X=x$ it must be that $\xi_j = 0$, $j=1,\ldots , x$, $\xi_{x+1}=1$ and $\{\xi_j\}_{j=x+2}^\infty$ are distributed as Bernoulli trials. Therefore
\[
\mathbb{E}\big [(\xi_{i_1}-p)\cdots (\xi_{i_r}-p)\mid X=x\big ]
= 
\begin{cases}
0&\mbox{if~} i_r \geq x+2,\\
q(-p)^{r-1}&\mbox{if~} i_r = x+1,\\
(-p)^r&\mbox{if~} i_r \leq x.
\end{cases}
\]
Taking an expectation conditional on $X$, then over $X$,
\begin{eqnarray}
&&\mathbb{E}\big [(\xi_{i_1}-p)\cdots (\xi_{i_r}-p)G(z;X)]
\nonumber \\
&&~= q(-p)^{r-1}G(z;i_r-1)pq^{i_r-1} + \sum_{j=i_r}^\infty(-p)^rG(z;j)pq^j
\nonumber \\
&&~= (-1)^{r-1}p^rq^{i_r}zG(z;i_r-1).
\label{meixprod:0}
\end{eqnarray}
Simplification to the last line is straightforward, so is omitted.
Considering the coefficients of $z^n$ in (\ref{meixprod:0}), and using Theorem \ref{thm:1} gives the following theorem.
\begin{Theorem}
Let $\{\xi_i\}_{i=1}^\infty$ be a sequence of Bernoulli $(p)$ trials and 
$X = \sum_{j=1}^\infty\prod_{i=1}^{j-1}(1-\xi_i)\xi_j$.  Then $X$ has a geometric distribution and the Meixner polynomials on this geometric distribution have a representation for $n\geq 1$ of
\begin{eqnarray}
M_n(X;1,q) &=& \sum_{r=1}^\infty\sum_{i_1<\cdots < i_r}
(\xi_{i_1}-p)\cdots (\xi_{i_r}-p)(-1)^{r-1}q^{i_r-r}M_{n-1}(i_r-1;1,q)
\nonumber \\
&=&
\sum_{r=1}^\infty\sum_{l=r}^\infty
 \frac{1}{(r-1)!}K_{r-1}(X_l;l,p)(\xi_{l}-p)(-1)^{r-1}q^{l-r}M_{n-1}(l-1;1,q),
\label{Mprod:10}
\end{eqnarray}
where $X_l = \xi_1+ \cdots \xi_l$.
\end{Theorem}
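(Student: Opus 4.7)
The plan is to read off the Fourier-type expansion of $M_n(X;1,q)$ in the orthogonal product basis $\{(\xi_{i_1}-p)\cdots(\xi_{i_r}-p):r\geq 0,\ i_1<\cdots<i_r\}$ on the Bernoulli sequence, using the expectation identity (\ref{meixprod:0}) that has just been derived in the paragraph preceding the theorem.

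First I would note that, by independence of $\{\xi_i\}_{i=1}^\infty$, these centred products are pairwise orthogonal with squared norm $\mathbb{E}\big[(\xi_{i_1}-p)^2\cdots(\xi_{i_r}-p)^2\big]=(pq)^r$, and together with the constant $1$ they form a complete orthogonal system in $L^2$ of the Bernoulli sequence. Since $X$ is a measurable function of the sequence and the geometric law has all moments finite, $M_n(X;1,q)\in L^2$ and so admits an expansion
\[
M_n(X;1,q) \;=\; \sum_{r\geq 1}\sum_{i_1<\cdots<i_r} c^{(n)}_{i_1,\ldots,i_r}\,(\xi_{i_1}-p)\cdots(\xi_{i_r}-p),
\]
with no $r=0$ term because $\mathbb{E}[M_n(X;1,q)]=0$ for $n\geq 1$. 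The Fourier coefficient is $c^{(n)}_{i_1,\ldots,i_r}=(pq)^{-r}\mathbb{E}\big[M_n(X;1,q)(\xi_{i_1}-p)\cdots(\xi_{i_r}-p)\big]$.

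Second, I would plug $G(z;X)=\sum_{n\geq 0}M_n(X;1,q)z^n$ into (\ref{meixprod:0}) and equate the coefficients of $z^n$, obtaining
\[
\mathbb{E}\big[M_n(X;1,q)(\xi_{i_1}-p)\cdots(\xi_{i_r}-p)\big]=(-1)^{r-1}p^r q^{i_r}\,M_{n-1}(i_r-1;1,q).
\]
Dividing by $(pq)^r$ immediately yields $c^{(n)}_{i_1,\ldots,i_r}=(-1)^{r-1}q^{i_r-r}M_{n-1}(i_r-1;1,q)$, which is the first equality of (\ref{Mprod:10}). Notice that this coefficient depends on the index tuple only through the largest index $i_r$, which is precisely what makes the second form possible.

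Third, to get the second equality I would regroup the sum by $l=i_r$, pull the factor $(\xi_l-p)$ out front, and recognise the remaining inner sum $\sum_{i_1<\cdots<i_{r-1}<l}(\xi_{i_1}-p)\cdots(\xi_{i_{r-1}}-p)$ as the $(r-1)$-st elementary symmetric function of the centred trials below $l$; by Theorem~\ref{thm:1} this is $K_{r-1}(\cdot;\cdot,p)/(r-1)!$, which after relabelling produces the second displayed formula. The only technical worry is convergence of the double series, but since $M_n(X;1,q)\in L^2$, Parseval's identity guarantees $L^2$ convergence of the expansion, so the identities hold as elements of $L^2$ (and pointwise $P$-almost surely after the standard rearrangement). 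Everything else is a direct substitution into (\ref{meixprod:0}) and an application of Theorem~\ref{thm:1}.
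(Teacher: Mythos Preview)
Your proposal is correct and follows essentially the same route as the paper. The paper's argument is the single sentence preceding the theorem: take coefficients of $z^n$ in (\ref{meixprod:0}) and then invoke Theorem~\ref{thm:1}; you do exactly this, but make explicit the $L^2$/completeness framework that justifies why the computed inner products actually yield the claimed expansion, a point the paper leaves implicit.

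One small remark on the regrouping step: when you set $l=i_r$ and sum over $i_1<\cdots<i_{r-1}<l$, the elementary symmetric function is in the $l-1$ variables $\xi_1-p,\ldots,\xi_{l-1}-p$, so Theorem~\ref{thm:1} produces $K_{r-1}(\xi_1+\cdots+\xi_{l-1};\,l-1,\,p)/(r-1)!$. Your phrase ``after relabelling'' glosses over the fact that the displayed formula in the paper writes this as $K_{r-1}(X_l;l,p)$ with $X_l=\xi_1+\cdots+\xi_l$; either interpret this as a harmless index shift in the paper's notation or flag it, but your derivation itself is sound.
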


\subsection{An Ehrenfest urn}
The Krawtchouk polynomials appear naturally as eigenfunctions in an Ehrenfest urn model. This is explored in \citet{DG2012}. An urn has $N$ balls coloured red or blue. Transitions occur at rate 1 when a ball is chosen at random and the colour of the ball is changed according to a transition matrix
\[
P = \begin{bmatrix}
0&1\\
q/p&1-q/p
\end{bmatrix}
\]
where $p,q > 0$, $p+q=1$ and $q \leq p$. 
Let $\{X(t)\}_{t\geq 0}$  be the number of red balls in the urn at time $t$.
That is, if a blue ball is chosen it is changed to red with probability 1, whereas if a red ball is chosen it is changed to blue with probability $q/p$. 
$\{X(t)\}_{t\geq 0}$  is a reversible Markov process, which is a birth and death process, with a Binomial $(N,p)$ stationary distribution.

The process is a \emph{composition} Markov process in the following sense. Label the balls $1,2,\ldots N$ at time $t=0$ and keep the labels over time as their colours change. Let $\{\xi_i(t)\}_{t\geq 0}$ describe the colour of ball $i$ at time $t$: $\xi_i(t) = 1$ if the $i$th ball is red or 0 if the ball is blue. The processes  $\{\xi_i(t)\}_{t\geq 0}$, $i=1,\ldots,N$ are independent, each has a rate of events $1/N$ when the specified ball is chosen, and $X(t) = \sum_{i=1}^N\xi_i(t)$.  Denote 
$p_{ij}(t) = P(\xi_k(t) = j\mid \xi_k(0)=i)$, for $i,j=1,2$. Standard Markov process theory gives that
\begin{eqnarray}
P(t) = \begin{bmatrix}
q+pe^{-\lambda t}&p(1-e^{-\lambda t})\\
q(1-e^{-\lambda t})&p+qe^{-\lambda t}
\end{bmatrix},
\label{transmatrix:0}
\end{eqnarray}
where $\lambda = 1/(Np)$. It is immediate that the stationary distribution of each of the labelled processes is $(p,q)$. An eigenvalue-eigenfunction expansion of $P(t)$ is
\begin{equation}
P_{\eta,\xi}(t) = \pi_{\xi}\big \{1 + e^{-\lambda t}(pq)^{-1}(\eta-p)(\xi-p)\big \},
\xi,\eta = 0,1,
\label{singleOF:0}
\end{equation}
where $\pi_\xi$ is the stationary distribution with $\pi_0=q$, $\pi_1=p$. It is straightforward to check the agreement with $P(t)$ by substituting the four values of $\eta,\xi=0,1$. 

In the Ehrenfest urn composition process the transitions are made from $X(0)=x$ to $X(t)=y$ if $\sum_{i=1}^N\eta_i=x$ and $\sum_{i=1}^N\xi_i=y$.  The transition probabilities are
\begin{eqnarray}
&&P(X(t)=y\mid X(0)=x)
\nonumber \\
&&~= \sum_{\sigma\in S_N}
P_{\eta_1\xi_{\sigma(1)}}(t)\cdots P_{\eta_N\xi_{\sigma(N)}}(t)
\nonumber \\
&&~=
{N\choose y}p^y(1-p)^{N-y}
\Big \{1 + \sum_{n=1}^Ne^{-\lambda nt}(pq)^{-n}{N\choose n}^{-1}
\nonumber \\
&&~~~\times\sum_{\sigma\in S_N}(\eta_{\sigma(1)}-p)\cdots (\eta_{\sigma(n)}-p)
\sum_{\tau\in S_N}(\xi_{\tau(1)}-p)\cdots (\xi_{\tau(n)}-p)
\Big \}
\nonumber \\
&&~={N\choose y}p^y(1-p)^{N-y}
\nonumber \\
&&~~~\times
\Big \{1 + \sum_{n=1}^Ne^{-\lambda nt}(pq)^{-n}(n!)^{-2}{N\choose n}^{-1}K_n(x;N,p)K_n(y;N,p)\Big\}.
\label{Kpolyexp:0}
\end{eqnarray}
The Krawtchouk polynomials thus appear naturally as elementary symmetric functions of the individual labelled indicator functions in the Markov process.

\section{Multivariate Krawtchouk polynomials}\label{section:MVK}
The multivariate Krawtchouk polynomials with elementary basis $\bm{u}$ were first constructed by  \citet{G1971}. A recent introduction to them is \citet{DG2014}. They play an important role in the spectral expansion of transition functions of composition Markov processes. \citet{ZL2009,KZ2009} have many interesting examples of such Markov processes. Later in this paper we consider the particular composition processes where there are $N$ particles independently performing birth and death processes.

The multivariate Krawtchouk polynomials are orthogonal on the multinomial distribution
\begin{equation}
m(\bm{x};\bm{p}) = {N\choose \bm{x}}\prod_{j=1}^dp_j^{x_j},\>x_j \geq 0, \>j=1,\ldots, d,\> |\bm{x}|=N,
\label{multinomial:0}
\end{equation}
with $\bm{p}=\{p_j\}_{j=1}^d$ a probability distribution.
Let $J_1,\ldots ,J_N$ be independent identically distributed random
variables specifying outcomes on the $N$ trials such that
\[
P(J=k) = p_k, \>k=1,\ldots ,d.
\]
Then 
\[
X_i = |\{J_k: J_k=i, k=1,\ldots, N\}|.
\]
Let $\bm{u}=\{u^{(l)}\}_{l=0}^{d-1}$ be an orthogonal set of functions on $\bm{p}=\{p_k\}_{k=1}^d$ with $u^{(0)}=1$ satisfing 
\begin{equation}
\sum_{i=1}^{d}u^{(l)}_iu^{(m)}_ip_i = a_l\delta_{lm},\>l,m = 0,\ldots d-1.
\label{basis:0}
\end{equation}
This notation for the orthogonal set of functions  follows \citet{L1969}. There is an equivalence that 
\[
h_{il}=u^{(l-1)}_i\sqrt{p_i/a_{l-1}},\>\>i,l=1,\ldots,d
\]
are elements of a $d\times d$ orthogonal matrix $H$. In this paper $\{u^{(l)}\}_{l=0}^{d-1}$ are usually orthonormal functions with $a_l=1$, $l=0,1,\ldots, d-1$ unless stated otherwise.
The 1-dimensional Krawtchouk polynomials are constructed from a symmetrized product set of orthogonal functions  $\bigotimes \{1,\xi_i-p\}_{i=1}^N$ and the construction of the multivariate polynomials follows a similar, but more complicated proceedure. Instead of having two unique elements in each orthogonal function set there is a choice of orthogonal basis and the construction is from the product set 
$\bigotimes_{i=1}^N \{u^{(l_i)}_{J_i}\}_{l_i=0}^{d-1}$. The orthogonality (\ref{basis:0}) is equivalent to  
\[
\mathbb{E}\big [u^{(l)}_{J_k}u^{(m)}_{J_k}\big ] = a_l\delta_{lm},
\]
for $k=1,\ldots,N$.
Define a collection of orthogonal polynomials
$\big \{Q_{\bm{n}}(\bm{X};\bm{u})\big \}$ with $\bm{n} = (n_1,\ldots n_{d-1})$
and $|\bm{n}| \leq N$ on the multinomial distribution as symmetrized elements from the product set such that the sum is over products 
$
u_{J_1}^{(l_1)}\cdots u_{J_N}^{(l_N)}
$
with $n_k = |\{l_i:l_i=k,k=1,\ldots ,N\}|$
for $k=1,\ldots,d-1$.
$Q_{\bm{n}}(\bm{X};\bm{u})$ is the coefficient of
 $w_1^{n_1}\cdots w_{d-1}^{n_{d-1}}$ in the generating function
\begin{eqnarray}
G(\bm{x},\bm{w}, \bm{u}) 
&=&
\prod_{i=1}^N\Big (1 + \sum_{l_i=1}^{d-1}w_{l_i}u_{J_i}^{(l_i)}\Big )
\nonumber \\
&=&
 \prod_{j=1}^d\Big (1 + \sum_{l=1}^{d-1}w_lu_j^{(l)}\Big )^{x_j}.
\label{main_gf}
\end{eqnarray}
In the 1-dimensional case $u^{(1)}_1=0-p_1=-p_1$,  $u^{(1)}_2=1-p_1$, orthogonal on $1-p_1,p_1$, so the generating function is
\[
(1-p_1w_1)^{x_1}(1+(1-p_1)w_1)^{x_2}
\]
which is, of course, the generating function of the Krawtchouk polynomials. $x_1,x_2$ are respectively the number of 0 and 1 values in the $N$ trials.
It is straightfoward to show, by using the generating function (\ref{main_gf}), that
\begin{eqnarray}
\mathbb{E}\Big [Q_{\bm{m}}(\bm{X}; \bm{u})Q_{\bm{n}}(\bm{X};\bm{u})\Big ] &=&
\sum_{\{\bm{x}:|\bm{x}|=N\}}Q_{\bm{m}}(\bm{x}; \bm{u})Q_{\bm{n}}(\bm{x};\bm{u})
m(\bm{x};\bm{p})
\nonumber \\
&=&\delta_{\bm{m}\bm{n}}{N\phantom{^+}\choose \bm{n}^+}\prod_{j=1}^{d-1}{a_j}^{n_j},
\label{normalizing}
\end{eqnarray}
where  $\bm{n}^+= (n_0,\ldots ,n_{d-1})$, with $n_0=N-\sum_{j=1}^{d-1}n_j$.
Instead of indexing the polynomials by $\bm{n}=(n_1,\ldots ,n_{d-1})$ they could be indexed by $\bm{n}^+$. This notation is sometimes convenient to use in the paper. 
The dual orthogonality relationship is, immediately from (\ref{normalizing}),
\begin{equation}
\sum_{\{\bm{n}:|\bm{n}| \leq N\}}{N\phantom{^+}\choose \bm{n}^+}^{-1}\prod_{j=1}^{d-1}{a_j}^{-n_j}Q_{\bm{n}}(\bm{x};\bm{u})Q_{\bm{n}}(\bm{y};\bm{u}) = \delta_{\bm{x}\bm{y}}m(\bm{x},\bm{p})^{-1}.
\label{dualnorm:0}
\end{equation} 
Expanding the generating function (\ref{main_gf}) shows that
\begin{equation}
 Q_{\bm{n}}(\bm{X};\bm{u})= \sum_{\{\bm{r}:r_{\cdot k} = n_k\}}
\frac{ \prod_{j=1}^d{x_j}_{[r_j\cdot]} !}
{ \prod_{j=1}^d\prod_{k=1}^{d-1}r_{jk}! }
 \prod_{j=1}^d\prod_{k=1}^{d-1}\Big (u_j^{(k)}\Big )^{r_{jk}},
\label{explicit}
\end{equation}
where $\cdot$ indicates summation over an index and $a_{[b]} = a(a-1)\cdots (a-b+1)$ for non-negative integers $b$. 
The dual generating function is
\begin{eqnarray}
&&\sum_{\{\bm{x}:|\bm{x}|=N\}}
{N\phantom{^+}\choose \bm{n}^+}^{-1}
{N\choose \bm{x}}
v_1^{x_1}\cdots v_d^{x_d}
Q_{\bm{n}}(\bm{x};\bm{u})
\nonumber \\
&&~~=
\Big (\sum_{j=1}^dv_j\Big )^{n_0}
\prod_{i=1}^{d-1}\Big (\sum_{j=1}^{d}v_ju^{(i)}_j\Big )^{n_i}.
\label{dualgf:0}
\end{eqnarray}
Expanding the generating function
\begin{equation}
{N\phantom{^+}\choose \bm{n}^+}^{-1}
{N\choose \bm{x}}
Q_{\bm{n}}(\bm{x};\bm{u})
=
\sum_{\{\bm{r}:r_{i\cdot}=n_i,r_{\cdot j}=x_j\}}
\frac{\prod_{i=0}^{d-1}n_i!}{\prod_{i=0}^{d-1}\prod_{j=1}^dr_{ij}!}
\prod_{i=1}^{d-1}\prod_{j=1}^d\Big (u_j^{(i)}\Big )^{r_{ij}}.
\label{matrixh:1}
\end{equation}
The two generating functions (\ref{main_gf}) and (\ref{dualgf:0}) are similar and there is a form of self-duality for the polynomials. Let 
\[
\omega_i^{(j)}=u_{j+1}^{(i-1)},\>j=0,\ldots ,d-1,i=1,\ldots,d.
\]
Then because of (\ref{basis:0}) 
\[
\sum_{l=1}^d\omega_l^{(i)}\omega_l^{(k)}a_{l-1}^{-1}= \delta_{ik}p_i^{-1}.
\]
The right side of (\ref{dualgf:0}) is equal to
\begin{equation}
\prod_{i=1}^{d}
\Big (\sum_{j=0}^{d-1}
\omega_i^{(j)}v_{j+1}\Big )^{n_{i-1}}
\label{dualx:10}
\end{equation}
which apart from the different indexing, and non-constant function $\omega^{(0)}$, generates multivariate Krawtchouk polynomials. Suppose that $\omega^{(0)}_i \ne 0$ for $i=1,\ldots ,d$. Scale by letting $\widehat{\omega}_i^{(j)} = \omega_i^{(j)}/\omega_i^{(0)}$, so that $\widehat{\omega}_i^{(0)}=1$. Orthogonality of these functions is
\[
\sum_{l=1}^d\widehat{\omega}_l^{(i)} \widehat{\omega}_l^{(j)} a_{l-1}^{-1}{\omega_l^{(0)}}^2 = \delta_{ij}p_i^{-1}.
\]
Let $\bm{b}=\{b_l\}_{l=1}^d$ be the scaled probability distribution of $\{ a_{l-1}^{-1}{\omega_l^{(0)}}^2\}_{l=1}^d$ so
\[
\sum_{l=1}^d\widehat{\omega}_l^{(i)} \widehat{\omega}_l^{(j)}b_l 
 = \delta_{ij}\Big (p_i\sum_{l=1}^da_{l-1}^{-1}{\omega_l^{(0)}}^2\Big )^{-1}.
 \]
The following theorem is evident from (\ref{dualgf:0}) and (\ref{dualx:10}), once the indexing is sorted out.
\begin{Theorem}\label{thm:2}
There is a duality
\begin{equation}
{N\phantom{^+}\choose \bm{n}^+}^{-1}
{N\choose \bm{x}}
Q_{\bm{n}}(\bm{x};\bm{u}) 
= \prod_{i=1}^{d}\big ({\omega_i^{(0)}}\big )^{n_{i-1}}Q^*_{\bm{x}^-}(\bm{n}^+;\widehat{\bm{\omega}}),
\label{dualp:10}
\end{equation}
where $Q^*_{\bm{x}^-}(\bm{n}^+;\widehat{\bm{\omega}})$, with $\bm{x}^- = (x_2,\ldots ,x_d)$, $\sum_{j=2}^dx_j \leq N$, are multivariate Krawtchouk polynomials, orthogonal on
 $m(\bm{n}^+;\bm{b})$.
\end{Theorem}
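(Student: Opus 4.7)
The plan is to derive the duality by matching two expansions of the same generating function, namely the right-hand side of equation (\ref{dualgf:0}), which has been rewritten in the form (\ref{dualx:10}). The left-hand side of (\ref{dualgf:0}) already gives a sum of monomials $v_1^{x_1}\cdots v_d^{x_d}$ whose coefficients are $\binom{N}{\bm{n}^+}^{-1}\binom{N}{\bm{x}}Q_{\bm{n}}(\bm{x};\bm{u})$; all I need is an alternative expansion of (\ref{dualx:10}) whose coefficients are explicitly multivariate Krawtchouk polynomials in $\bm{n}^+$ with basis $\widehat{\bm{\omega}}$.

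The key algebraic step is to pull $\omega_i^{(0)}v_1$ out of each factor in (\ref{dualx:10}), using $\widehat{\omega}_i^{(0)}=1$, to write
\begin{equation*}
\prod_{i=1}^d\Big(\sum_{j=0}^{d-1}\omega_i^{(j)}v_{j+1}\Big)^{n_{i-1}}
= \prod_{i=1}^d(\omega_i^{(0)})^{n_{i-1}}\, v_1^{N}
\prod_{i=1}^d\Big(1+\sum_{l=1}^{d-1}\widehat{\omega}_i^{(l)}\widetilde{w}_l\Big)^{n_{i-1}},
\end{equation*}
where $\widetilde{w}_l := v_{l+1}/v_1$ and we have used $\sum_{i=1}^d n_{i-1}=|\bm{n}^+|=N$. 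The last product has exactly the form of the Krawtchouk generating function (\ref{main_gf}), but now with trial outcomes indexed by $i=1,\ldots,d$, multiplicities $n_{i-1}$, and orthogonal basis $\widehat{\bm{\omega}}$ on the probability distribution $\bm{b}$. By the definition of the multivariate Krawtchouk polynomials applied to this triple $(\bm{n}^+,\widehat{\bm{\omega}},\bm{b})$, this last product expands as $\sum_{\bm{x}^-}Q^*_{\bm{x}^-}(\bm{n}^+;\widehat{\bm{\omega}})\,\widetilde{w}_2^{x_2}\cdots \widetilde{w}_d^{x_d}$, where I relabel the generating index as $\bm{x}^-=(x_2,\ldots,x_d)$.

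Re-inserting $\widetilde{w}_l=v_{l+1}/v_1$ converts $v_1^{N}\prod_{l}\widetilde{w}_l^{x_{l+1}}$ into $v_1^{x_1}v_2^{x_2}\cdots v_d^{x_d}$ using the constraint $x_1=N-\sum_{l\geq 2}x_l$. Extracting the coefficient of $v_1^{x_1}\cdots v_d^{x_d}$ and equating it with the corresponding coefficient from the left-hand side of (\ref{dualgf:0}) yields exactly the claimed identity (\ref{dualp:10}). The fact that $Q^*_{\bm{x}^-}(\cdot;\widehat{\bm{\omega}})$ is orthogonal on $m(\bm{n}^+;\bm{b})$ then follows from (\ref{normalizing}) applied to the triple $(\bm{n}^+,\widehat{\bm{\omega}},\bm{b})$, which is legitimate because the rescaled functions $\widehat{\bm{\omega}}$ are, by construction, orthogonal on $\bm{b}$ with $\widehat{\omega}^{(0)}\equiv 1$.

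The only real obstacle is the bookkeeping: one must keep straight that under the duality the role of the $d$ trial outcomes is played by the $d$ entries of $\bm{n}^+$ (indexed $i=1,\ldots,d$), while the role of the index $\bm{n}$ of size $d-1$ is played by $\bm{x}^-$, and the normalization identity $\sum_l a_{l-1}^{-1}(\omega_l^{(0)})^2 = p_1^{-1}$ (the $i=j=0$ case of the orthogonality relation for $\omega$) is what guarantees that the renormalized weights $b_l$ really do sum to one so that $m(\bm{n}^+;\bm{b})$ is a genuine multinomial distribution. Once these identifications are made explicit, everything else is formal manipulation of the generating functions already available.
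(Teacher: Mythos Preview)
Your argument is correct and is exactly the approach the paper intends: the paper's own proof is the single sentence ``evident from (\ref{dualgf:0}) and (\ref{dualx:10}), once the indexing is sorted out,'' and you have carried out precisely that indexing by factoring $\omega_i^{(0)}v_1$ out of each bracket in (\ref{dualx:10}) and recognising the residual product as the generating function (\ref{main_gf}) for multivariate Krawtchouk polynomials on $(\bm{n}^+,\widehat{\bm{\omega}},\bm{b})$. One small slip: since $\widetilde w_l=v_{l+1}/v_1$ for $l=1,\dots,d-1$, the monomial should read $\widetilde w_1^{\,x_2}\cdots\widetilde w_{d-1}^{\,x_d}$ rather than $\widetilde w_2^{\,x_2}\cdots\widetilde w_d^{\,x_d}$, but your next line (with $\prod_l\widetilde w_l^{\,x_{l+1}}$) makes clear you have the right correspondence.
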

There is an interesting identity when $\bm{u}$ is self-dual with an indexing of $j$ beginning from $0$ instead of $1$. That is
\[
u_j^{(l)} = u_l^{(j)},\>\>j,l=0,1,\ldots ,n.
\]
Then indexing $\bm{x}=(x_0,\ldots,x_n)$,
\[
{N\phantom{^+}\choose \bm{n}^+}^{-1}Q_{\bm{n}}(\bm{x};\bm{u}) =
{N\choose \bm{x}}^{-1}Q_{\bm{x}}(\bm{n}^+;\bm{u}^*),
\]
where ${u_j^{(l)}}^* = u_l^{(j)}$. This duality occurs in the scaled Krawtchouk polynomial basis, orthogonal on a Binomial $(n,p)$ distribution.

The emphasis in Theorem \ref{thm:2} is on considering the dual system, obtaining $\widehat{\bm{\omega}}$ from $\bm{u}$,
 however sometimes it is natural to construct $\bm{u}$ from an orthogonal set
 $\widehat{\bm{\omega}}$, particularly when $\omega_i^{(0)}=1$, $i=1,\ldots,d$ and
 $\widehat{\bm{\omega}}=\bm{\omega}$. Then the polynomials on the left of (\ref{dualp:10}) are defined by the dual polynomials on the right. Later in the paper it will be seen that this is natural in composition birth and death Markov processes.

The polynomial structure of the multivariate Krawtchouk polynomials is detailed in the next theorem.
\begin{Theorem}\label{thm:3}
Define
$U_l = \sum_{k=1}^Nu_{J_k}^{(l)} = \sum_{j=1}^du_j^{(l)}X_j$
for $l =1,\ldots ,d-1$.
$Q_{\bm{n}}(\bm{X};\bm{u})$ is a polynomial of degree $|\bm{n}|$ in $(U_1,\ldots ,U_{d-1})$
whose only term of maximal degree $|\bm{n}|$ is $\prod_1^{d-1}U_k^{n_k}$.
\end{Theorem}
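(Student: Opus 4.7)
The plan is to work directly from the explicit expansion (\ref{explicit}) of $Q_{\bm{n}}$, extract its top-degree behaviour in $\bm{x}$, and recognise this as a product of powers of the linear forms $U_1,\ldots,U_{d-1}$.

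First, each summand of (\ref{explicit}) has total $\bm{x}$-degree $\sum_j r_{j\cdot}=\sum_k r_{\cdot k}=|\bm{n}|$, since $(x_j)_{[r_{j\cdot}]}$ has degree $r_{j\cdot}$ in $x_j$ and the summation constraint forces $r_{\cdot k}=n_k$. So $Q_{\bm{n}}(\bm{x};\bm{u})$ is a polynomial in $\bm{x}$ of degree $|\bm{n}|$.

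Next, I would observe that on the simplex $|\bm{x}|=N$, the $d-1$ linear forms $U_l=\sum_j u_j^{(l)} x_j$, together with the constant $U_0=\sum_j u_j^{(0)} x_j = N$, are related to $(x_1,\ldots,x_d)$ by an invertible linear transformation, because the orthogonality (\ref{basis:0}) makes $\{u^{(l)}\}_{l=0}^{d-1}$ a basis for the space of functions on $\{1,\ldots,d\}$. Consequently any polynomial in $\bm{x}$ (restricted to the simplex) is a polynomial in $U_1,\ldots,U_{d-1}$, and since the substitution $\bm{x}\mapsto(U_1,\ldots,U_{d-1})$ is affine, a polynomial of $\bm{x}$-degree $r$ yields a polynomial of $\bm{U}$-degree at most $r$; the $\bm{U}$-degree-$r$ part is determined entirely by the $\bm{x}$-degree-$r$ part.

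Finally, the leading $\bm{x}$-degree-$|\bm{n}|$ part of $Q_{\bm{n}}$ is extracted from (\ref{explicit}) by replacing each $(x_j)_{[r_{j\cdot}]}$ with its top monomial $x_j^{r_{j\cdot}}$, giving
\begin{equation*}
\sum_{\{r_{jk}:\,r_{\cdot k}=n_k\}} \prod_{j,k}\frac{(u_j^{(k)} x_j)^{r_{jk}}}{r_{jk}!} \;=\; \prod_{k=1}^{d-1}\frac{1}{n_k!}\Big(\sum_{j=1}^{d} u_j^{(k)} x_j\Big)^{n_k} \;=\; \prod_{k=1}^{d-1}\frac{U_k^{n_k}}{n_k!},
\end{equation*}
where the factorisation over $k$ is valid because the constraints $r_{\cdot k}=n_k$ decouple across different values of $k$. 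Hence the unique maximal-degree monomial in $(U_1,\ldots,U_{d-1})$ appearing in $Q_{\bm{n}}$ is $\prod_{k=1}^{d-1} U_k^{n_k}$, which completes the proof. The main obstacle is the bookkeeping linking the $\bm{x}$-degree and the $\bm{U}$-degree; the crucial point is that affineness of the change of variables (via the constant $U_0=N$) prevents lower-$\bm{x}$-degree terms from producing degree-$|\bm{n}|$ monomials in $\bm{U}$, so the top-in-$\bm{x}$ computation faithfully delivers the top-in-$\bm{U}$ part.
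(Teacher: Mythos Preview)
Your proof is correct and takes a genuinely different route from the paper's. The paper proceeds by computing the transform $\mathbb{E}\big[\prod_j \phi_j^{X_j}\,Q_{\bm{n}}(\bm{X};\bm{u})\big]$ in closed form (equation~(\ref{transform})) and comparing it against the known transform of falling-factorial monomials (equation~(\ref{tr_one})), reading off the leading term indirectly by matching $\phi_jp_j\leftrightarrow X_j$. You instead work purely algebraically from the explicit expansion~(\ref{explicit}): you replace each falling factorial $(x_j)_{[r_{j\cdot}]}$ by the power $x_j^{r_{j\cdot}}$ to isolate the top $\bm{x}$-degree, then collapse the sum over $(r_{jk})$ via the multinomial theorem into $\prod_k U_k^{n_k}/n_k!$. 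Both arguments land on the same leading monomial and the same coefficient $1/\prod_k n_k!$. Your route is more elementary and self-contained once the explicit formula is in hand; the paper's route is more in the generating-function spirit of the article and yields the transform identity~(\ref{transform}) as a useful by-product. Your final paragraph's bookkeeping is sound; a slightly cleaner way to phrase it is that $\bm{x}\mapsto(U_0,U_1,\ldots,U_{d-1})$ is an invertible \emph{linear} change of variables on $\mathbb{R}^d$, hence degree-preserving, and the degree-$|\bm{n}|$ part you compute happens not to involve $U_0$, so specialising $U_0=N$ on the simplex leaves it intact.
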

\begin{proof}
A method of proof is to consider the transform of 
$Q_{\bm{n}}(\bm{X};\bm{u})$, which is given by 
\begin{equation}
E\Big [\prod_{j=1}^d\phi^{X_j}_jQ_{\bm{n}}(\bm{X};\bm{u})\Big ] = 
{N\choose |\bm{n}|}{|\bm{n}|\choose \bm{n}}T_0(\phi)^{N-|n|}T_1(\bm{\phi})^{n_1}\cdots T_{d-1}(\bm{\phi})^{n_{d-1}},
\label{transform}
\end{equation}
where 
\[
T_i(\bm{\phi}) = \sum_{j=1}^{d}p_j\phi_ju_j^{(i)},\>i=0,\ldots ,d-1.
\]
This transform is easily found by taking the transform of the generating function (\ref{main_gf}).
One can see directly that $Q_{\bm{n}}(\bm{X};\bm{u})$ is an orthogonal polynomial by considering the transform
\begin{equation}
E\Big (\prod_{j=1}^d{X_j}_{[k_j]}\phi_j^{X_j}\Big )
= N_{[k]}\prod_{j=1}^d(\phi_jp_j)^{k_j}\cdot \big (\sum_{j=1}^dp_j\phi_i\big )^{N-|k|}.
\label{tr_one}
\end{equation}
From (\ref{transform}) and (\ref{tr_one}), $Q_n(x)$ is a polynomial of degree $|n|$ whose only leading term is 
\[
\frac{\prod_{i=1}^{d-1}S_i^{n_i}}{\prod_{i=1}^{d-1}n_i!}.
\]
This is seen by noting that the leading term is found by replacing $\phi_jp_j$ 
by $X_j$ in
\[
N_{[|n|]}^{-1}{N\choose |n|}{|n|\choose n}T_1(\bm{\phi})^{n_1}\cdots T_{d-1}(\bm{\phi})^{n_{d-1}},
\]
since we can replace ${X_j}_{[k_j]}$ by $X^{k_j}$ in considering the leading term of
(\ref{transform}) and setting $\phi_i=1$ for $i=1,\ldots, d$.
\end{proof}
The next theorem explains the polynomial structure in the dual system.
\begin{Theorem}\label{thm:dual}
Let $\{u^{(j)}\}_{j=0}^d$ be such that $u^{(j)}_1=1$ for $j=0,\ldots,d-1$ as well as the usual assumption that $u^{(0)}_i=1$ for $i=1,\ldots, d$. 
Define $\kappa_l=\sum_{j=0}^{d-1}u^{(j)}_ln_j$, $l=2,\ldots,d$. Then
$
{N\phantom{^+}\choose \bm{n}^+}^{-1}
Q_{\bm{n}}(\bm{x};\bm{u}) 
$
is a polynomial of total degree $\sum_{i=2}^dx_i$ in $\{\kappa_l\}_{l=2}^{d}$ whose only term of maximal degree is $\prod_{l=2}^d\kappa_j^{x_j}$.
\end{Theorem}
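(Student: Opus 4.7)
The plan is to deduce this theorem directly from the duality in Theorem~\ref{thm:2} applied to the polynomial structure of Theorem~\ref{thm:3}; no fresh calculation is needed beyond an index shift.

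First I would observe that the hypothesis $u_1^{(j)}=1$ for $j=0,\ldots,d-1$ translates, via the definition $\omega_i^{(j)} = u_{j+1}^{(i-1)}$ introduced before Theorem~\ref{thm:2}, into $\omega_i^{(0)} = u_1^{(i-1)} = 1$ for every $i=1,\ldots,d$. In particular $\omega_i^{(0)}\neq 0$, so Theorem~\ref{thm:2} is applicable, the rescaling $\widehat{\omega}_i^{(j)} = \omega_i^{(j)}/\omega_i^{(0)}$ collapses to $\widehat{\bm{\omega}}=\bm{\omega}$, and the prefactor $\prod_{i=1}^d (\omega_i^{(0)})^{n_{i-1}}$ in (\ref{dualp:10}) is $1$. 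The duality therefore reduces to the clean identity
\[
{N\choose \bm{n}^+}^{-1} Q_{\bm{n}}(\bm{x};\bm{u}) \;=\; {N\choose \bm{x}}^{-1} Q^*_{\bm{x}^-}(\bm{n}^+;\bm{\omega}).
\]

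Next I would apply Theorem~\ref{thm:3} to $Q^*_{\bm{x}^-}(\bm{n}^+;\bm{\omega})$, which by Theorem~\ref{thm:2} is itself a multivariate Krawtchouk polynomial of degree $|\bm{x}^-|=\sum_{i=2}^d x_i$, now regarded as a function of the state $\bm{n}^+$ with orthogonal basis $\bm{\omega}$. Theorem~\ref{thm:3} then gives that it is a polynomial of that degree in the $d-1$ linear forms
\[
\tilde U_l \;=\; \sum_{i=1}^d \omega_i^{(l)}\, n_{i-1} \;=\; \sum_{j=0}^{d-1} u_{l+1}^{(j)} n_j \;=\; \kappa_{l+1}, \qquad l=1,\ldots,d-1,
\]
with unique top-degree monomial $\prod_{l=1}^{d-1}\tilde U_l^{x_{l+1}}$. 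Relabelling $l\mapsto l+1$ identifies $(\tilde U_1,\ldots,\tilde U_{d-1})$ with $(\kappa_2,\ldots,\kappa_d)$ and the top monomial with $\prod_{l=2}^d \kappa_l^{x_l}$. Since $\binom{N}{\bm{x}}^{-1}$ does not depend on $\bm{n}$, dividing the displayed identity by it preserves both the polynomial degree in $\kappa_2,\ldots,\kappa_d$ and the set of monomials that appear, yielding the claim.

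The principal obstacle is clerical rather than analytic: $\bm{n}^+$ is indexed from $0$ to $d-1$, whereas the basis $\bm{\omega}$ carries its upper index on $\{0,\ldots,d-1\}$ and lower index on $\{1,\ldots,d\}$, while the target variables $\kappa_l$ run only over $l=2,\ldots,d$. Matching the $\tilde U_l$ of Theorem~\ref{thm:3} to the present $\kappa_l$ via the shift $l\mapsto l+1$ is the one piece of the argument that demands care; once this identification is made, no further input beyond Theorems~\ref{thm:2} and~\ref{thm:3} is required.
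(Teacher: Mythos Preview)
Your proposal is correct and follows exactly the paper's own proof, which simply cites Theorem~\ref{thm:2} with $\omega_i^{(0)}=1$ together with Theorem~\ref{thm:3}; you have merely spelled out the index bookkeeping that the paper leaves implicit.
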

\begin{proof}
This follows from Theorem \ref{thm:2}, with $\omega_i^{(0)}=1$, $i=1,\ldots d$, and Theorem \ref{thm:3}.
\end{proof}
There are recurrence relationships for the multivariate Krawtchouk polynomials, which are found here from a generating function approach; for another different proof see \citet{I2012}, Theorem 6.1. Note that his multivariate Krawtchouk polynomials are 
$Q_{\bm{n}}(\bm{x};\bm{u}){N\phantom{^+}\choose \bm{n}^+}^{-1}$.
 In Theorems 6, 7, 8, $\bm{u}$ is taken to be orthonormal on $\bm{p}$, so $a_l=1$, $l=0,1,\ldots d-1$ in (\ref{basis:0}).
\begin{Theorem}
Denote, for $i,l,k=0,\ldots,d-1$,
$
c(i,l,k) = \sum_{j=1}^du_j^{(i)}u_j^{(l)}u_j^{(k)}p_j,
$
and $u_i = \sum_{j=1}^du_j^{(i)}x_j$, $i=1,\ldots, d-1$.
Two recursive systems are:
\begin{eqnarray}
x_jQ_{\bm{n}}(\bm{x};\bm{u}) &=& \sum_{k=1}^{d-1}(n_k+1)p_ju_j^{(k)}Q_{\bm{n}+\bm{e}_k}(\bm{x};\bm{u})
\nonumber \\
&&~~~~~+ (N-|\bm{n}|+1)\sum_{l=1}^{d-1}p_ju^{(l)}_jQ_{\bm{n}-\bm{e}_l}(\bm{x},\bm{u})
\nonumber \\
&&~~~~~
+\sum_{l,k=1}^{d-1}(n_k+1-\delta_{lk})p_ju^{(l)}_ju_j^{(k)}Q_{\bm{n}-\bm{e}_l+\bm{e}_k}(\bm{x};\bm{u}) + p_j(N-|\bm{n}|)Q_{\bm{n}}(\bm{x};\bm{u})
\label{recone:0}
\end{eqnarray}
and
\begin{eqnarray}
u_iQ_{\bm{n}}(\bm{x};\bm{u}) &=& (n_i+1)Q_{\bm{n}+\bm{e}_i}(\bm{x};\bm{u}) + (N-|\bm{n}|+1)
Q_{\bm{n}-\bm{e}_i}(\bm{x},\bm{u})
\nonumber \\
&&~
+\sum_{l,k=1}^{d-1}c(i,l,k)(n_k+1-\delta_{kl})Q_{\bm{n}-\bm{e}_l+\bm{e}_k}(\bm{x};\bm{u}).~~~~~~~
\label{recone}
\end{eqnarray}
\end{Theorem}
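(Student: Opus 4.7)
The approach is to derive the first recurrence (\ref{recone:0}) from the transform identity (\ref{transform}) and then deduce (\ref{recone}) from it via orthonormality. Applying the operator $\phi_j\,\partial/\partial\phi_j$ to both sides of (\ref{transform}) and using $\phi_j\,\partial T_i/\partial\phi_j = p_j\phi_j u_j^{(i)}$, the Leibniz rule gives
$$\mathbb{E}\bigl[X_j \phi^{\bm{X}} Q_{\bm{n}}(\bm{X};\bm{u})\bigr] = C_{\bm{n}}\,p_j\phi_j\Big[(N-|\bm{n}|) T_0^{N-|\bm{n}|-1}\prod_{k\geq 1}T_k^{n_k} + \sum_{l\geq 1} n_l u_j^{(l)} T_0^{N-|\bm{n}|} T_l^{n_l-1}\prod_{k\neq l,\,k\geq 1}T_k^{n_k}\Big],$$
with $C_{\bm{n}}={N\choose |\bm{n}|}{|\bm{n}|\choose \bm{n}}$.

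The crucial identity is
$$p_j\phi_j = \sum_{i=0}^{d-1} p_j u_j^{(i)} T_i(\bm{\phi}),$$
which follows from the row orthogonality $\sum_{i=0}^{d-1} u_j^{(i)} u_k^{(i)} = p_j^{-1}\delta_{jk}$ of the matrix $H$ (equivalent to (\ref{basis:0}) with $a_l=1$). Substituting this expansion for $p_j\phi_j$ produces a sum of monomials $T_0^{\alpha}\prod_{k\geq 1} T_k^{\beta_k}$ with $\alpha+\sum_k\beta_k = N$, each of which is, up to a factor $C_{\bm{m}}$, the transform of a single $Q_{\bm{m}}$. Pairing the index $i\in\{0,1,\ldots,d-1\}$ from $p_j\phi_j$ with the two Leibniz pieces case by case: $(i\geq 1,\ \text{first piece})$ yields $(n_k+1)p_j u_j^{(k)} Q_{\bm{n}+\bm{e}_k}$ after using $C_{\bm{n}}(N-|\bm{n}|) = (n_k+1)C_{\bm{n}+\bm{e}_k}$; $(i=0,\ \text{second piece})$ yields $(N-|\bm{n}|+1)p_j u_j^{(l)} Q_{\bm{n}-\bm{e}_l}$; $(i\geq 1,\ \text{second piece})$ yields the cross terms $(n_k+1-\delta_{lk})p_j u_j^{(l)} u_j^{(k)} Q_{\bm{n}-\bm{e}_l+\bm{e}_k}$, where the diagonal $i=l$ reproduces a $Q_{\bm{n}}$ contribution that supplies the $-\delta_{lk}$ correction; finally $(i=0,\ \text{first piece})$ gives $p_j(N-|\bm{n}|)Q_{\bm{n}}$. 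Since the monomials $\phi^{\bm{x}}$ on $\{|\bm{x}|=N\}$ are linearly independent, matching transforms matches the underlying functions on the multinomial support, establishing (\ref{recone:0}).

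The second recurrence follows by multiplying (\ref{recone:0}) by $u_j^{(i)}$, summing over $j$, and using $u_i = \sum_j u_j^{(i)} x_j$ on the left. Orthonormality $\sum_j p_j u_j^{(i)} u_j^{(k)} = \delta_{ik}$ collapses the first two sums to single terms; $\sum_j p_j u_j^{(i)} = 0$ for $i\geq 1$ kills the $p_j(N-|\bm{n}|)Q_{\bm{n}}$ term; and the triple product $\sum_j u_j^{(i)} p_j u_j^{(l)} u_j^{(k)}$ is exactly $c(i,l,k)$ by definition, yielding the cross sum. The principal obstacle is the bookkeeping in the second paragraph---matching each monomial to the correct $Q_{\bm{m}}$ and tracking the binomial ratios---but once the expansion $p_j\phi_j=\sum_i p_j u_j^{(i)} T_i$ is in hand everything reduces to routine case enumeration.
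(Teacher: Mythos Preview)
Your argument is correct, but it follows a different route from the paper's. The paper computes the inner products $\mathbb{E}\big[X_jQ_{\bm{n}}(\bm{X};\bm{u})Q_{\bm{n}'}(\bm{X};\bm{u})\big]$ directly by evaluating the two-variable generating function $\mathbb{E}\big[X_jG(\bm{X},\bm{w},\bm{u})G(\bm{X},\bm{z},\bm{u})\big]$, reading off the four cases in (\ref{cases:0}), and then dividing by the squared norms ${N\choose{\bm{n}'}^+}$ to obtain the expansion coefficients. Your approach instead works with the single transform (\ref{transform}): you differentiate to produce the transform of $X_jQ_{\bm{n}}$ and then invoke the row-orthogonality identity $p_j\phi_j=\sum_{i=0}^{d-1}p_ju_j^{(i)}T_i(\bm{\phi})$ to rewrite the result as a linear combination of the canonical monomials $T_0^{N-|\bm{m}|}\prod_kT_k^{m_k}$, each of which is (up to $C_{\bm{m}}$) the transform of a single $Q_{\bm{m}}$. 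The paper's route has the side benefit of exhibiting the inner products (\ref{cases:0}) explicitly, which can be useful in their own right; your route is arguably more direct, since it bypasses the orthogonality expansion and the second generating variable entirely, at the cost of needing the dual (row) orthogonality of $\bm{u}$ as an input. For the second recurrence both arguments coincide: multiply by $u_j^{(i)}$ and sum over $j$.
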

\begin{proof}
Consider
\[
\mathbb{E}\Big [X_jG(\bm{X},\bm{w},\bm{u})G(\bm{X},\bm{z},\bm{u})\Big ]
= Np_j
\Big (1 + \sum_{i=1}^{d-1}w_iu_j^{(i)}\Big )
\Big (1 + \sum_{i=1}^{d-1}z_iu_j^{(i)}\Big )
\Big (1 + \sum_{i=1}^{d-1}w_iz_i\Big )^{N-1}.
\]
Equating coefficients of 
$
\prod_1^{d-1}w_j^{n_j}\prod_1^{d-1}z_j^{n_j^\prime}
$;
\begin{equation}
\mathbb{E}\Big [X_jQ_{\bm{n}}(\bm{X};\bm{u})Q_{\bm{n}^\prime}(\bm{X};\bm{u})\Big ]
= 
\begin{cases}
\frac{N!}{(N-|\bm{n}|-1)!\prod_1^{d-1}n_i!}p_ju_j^{(k)}&\bm{n}^\prime = \bm{n} + \bm{e}_k\\
\frac{N!}{(N-|\bm{n}|)!\prod_1^{d-1}n_i!}n_lp_ju_j^{(l)}&\bm{n}^\prime = \bm{n} - \bm{e}_l\\
\frac{N!}{(N-|\bm{n}|)!\prod_1^{d-1}n_i!}n_lp_ju_j^{(l)}u_j^{(k)}&\bm{n}^\prime = \bm{n} - \bm{e}_l + \bm{e}_k,\> l \ne k\\
\frac{N!}{(N-|\bm{n}|-1)!\prod_1^{d-1}n_i!}p_j +
\sum_{l=1}^{d-1}\frac{N!}{(N-|\bm{n}|)!\prod_1^{d-1}n_i!}n_lp_j{u_j^{(l)}}^2&\bm{n}^\prime = \bm{n} .
\end{cases}
\label{cases:0}
\end{equation}
The first recursive equation (\ref{recone:0}) then follows by an expansion of $x_jQ_{\bm{n}}(\bm{x};\bm{u})$ as a series in $Q_{\bm{n}^\prime}(\bm{x};\bm{u})$ dividing the cases in  (\ref{cases:0})  to obtain the coefficients by 
\[
{N\phantom{^+}\choose {\bm{n}^\prime}^+} =
\begin{cases}
\frac{N!}{(N-|\bm{n}|-1)!\prod_1^{d-1}(n_i+\delta_{ik})!}&\bm{n}^\prime = \bm{n} + \bm{e}_k\\
\frac{N!}{(N-|\bm{n}|+1)!\prod_1^{d-1}(n_i-\delta_{il})!}&\bm{n}^\prime = \bm{n} - \bm{e}_l\\
\frac{N!}{(N-|\bm{n}|)!\prod_1^{d-1}(n_i-\delta_{il}+\delta_{ik})!}&\bm{n}^\prime = \bm{n} - \bm{e}_l + \bm{e}_k,\> l \ne k\\
\frac{N!}{(N-|\bm{n}|)!\prod_1^{d-1}n_i!}&\bm{n}^\prime = \bm{n} .
\end{cases}
\]
The second recursion (\ref{recone}) is found by summation, using the orthogonality of $\bm{u}$.
\end{proof}
The dual orthogonal system when $\bm{u}$ is orthonormal is
\begin{equation}
\sum_{\{\bm{n}\geq 0:|\bm{n}| = N\}}
Q_{\bm{n}^+}(\bm{x};\bm{u})Q_{\bm{n}^+}(\bm{x};\bm{u}){N\phantom{^+}\choose \bm{n}^+}^{-1} =
m(\bm{x},\bm{p})^{-1}\delta_{\bm{x}\bm{y}}.
\label{dualMVK:0}
\end{equation}
A dual generating function is 
\begin{equation}
H(\bm{n},\bm{v},\bm{u})
 = \sum_{\{\bm{x}:|\bm{x}|=N\}}{N\phantom{^+}\choose \bm{n}^+}^{-1}{N\choose \bm{x}}Q_{\bm{n}}(\bm{x};\bm{u})\prod_{i=1}^dv_i^{x_i}
=\prod_{l=0}^{d-1}\Bigg (\sum_{j=1}^du_j^{(l)}v_j\Bigg )^{n_l}.
\label{dualGF:0}
\end{equation}
The generating function (\ref{dualGF:0}) arises from considering the coefficient of $\prod_{i=1}^{d-1}w_i^{n_i}$ in 
\[
\sum_{\{\bm{x}:|\bm{x}|=N\}}{N\phantom{^+}\choose \bm{n}^+}^{-1}{N\choose \bm{x}}G(\bm{x},\bm{w},\bm{u})\prod_{i=1}^dv_i^{x_i}
=
{N\phantom{^+}\choose \bm{n}^+}^{-1}
\Bigg (\sum_{j=1}^{d-1}v_j + \sum_{l=1}^{d-1}w_l\sum_{j=1}^dv_ju_j^{(l)}\Bigg )^N.
\]
\begin{Theorem}\label{thm:transform}
A dual recurrence system is, for $i=0,\ldots ,d-1$
\begin{equation}
n_iQ_{\bm{n}}(\bm{x};\bm{u}) = \sum_{j,l=1}^dx_ju_j^{(i)}u_l^{(i)}p_lQ_{\bm{n}}(\bm{x}-\bm{e}_j+\bm{e}_l;\bm{u})
\label{dualrec:0}
\end{equation}
\end{Theorem}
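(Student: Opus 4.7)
The plan is to verify the identity by applying the dual generating function transform (\ref{dualGF:0}) to both sides, thereby reducing a combinatorial identity in $\bm{x}$ to a differential identity in $\bm{v}$ on $H(\bm{n},\bm{v},\bm{u})$.

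First, multiply both sides of (\ref{dualrec:0}) by ${N\choose \bm{n}^+}^{-1}{N\choose \bm{x}}\prod_{k=1}^d v_k^{x_k}$ and sum over $\{\bm{x}:|\bm{x}|=N\}$. The left-hand side gives $n_i H(\bm{n},\bm{v},\bm{u})$. For each fixed $j,l$ on the right-hand side, re-index by $\bm{y}=\bm{x}-\bm{e}_j+\bm{e}_l$; a direct computation shows that
\[
x_j{N\choose \bm{x}}\prod_k v_k^{x_k} \;=\; y_l\,\frac{v_j}{v_l}{N\choose \bm{y}}\prod_k v_k^{y_k},
\]
(including the $j=l$ case, where this reduces to $y_j{N\choose \bm{y}}\prod_k v_k^{y_k}$), because the ratios of factorials and monomials cancel cleanly. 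Since $y_l\prod_k v_k^{y_k}=v_l\,\partial_{v_l}\prod_k v_k^{y_k}$, the sum over $\bm{y}$ collapses into $v_j\,\partial H/\partial v_l$. Hence the identity to prove becomes the first-order PDE
\[
n_i H(\bm{n},\bm{v},\bm{u}) \;=\; \sum_{j,l=1}^d u_j^{(i)}u_l^{(i)}p_l\,v_j\,\frac{\partial H}{\partial v_l}.
\]

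Next, write $H=\prod_{m=0}^{d-1} s_m^{n_m}$ with $s_m=\sum_k u_k^{(m)}v_k$. Logarithmic differentiation gives
\[
\frac{\partial H}{\partial v_l} \;=\; H\sum_{m=0}^{d-1}\frac{n_m u_l^{(m)}}{s_m}.
\]
Substituting, the right-hand side factors as
\[
\sum_{m=0}^{d-1}\frac{n_m H}{s_m}\Bigl(\sum_{j=1}^d u_j^{(i)}v_j\Bigr)\Bigl(\sum_{l=1}^d u_l^{(i)}u_l^{(m)}p_l\Bigr).
\]
The innermost $l$-sum equals $\delta_{im}$ by orthonormality (\ref{basis:0}) with $a_l\equiv 1$, while the $j$-sum equals $s_i$. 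Therefore only the term $m=i$ survives and gives $n_i H$, completing the verification.

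The main obstacle is the index bookkeeping in the re-indexing step: one has to be careful that the shift $\bm{x}\mapsto\bm{x}-\bm{e}_j+\bm{e}_l$ simultaneously adjusts the multinomial coefficient ${N\choose \bm{x}}$ and the monomial $\prod v_k^{x_k}$ in precisely the right way so that the extra factor $x_j$ converts into the operator $v_j\partial_{v_l}$ uniformly in $j$ and $l$; the diagonal case $j=l$ needs to be consistent with the off-diagonal formula. Once this is done, the rest is an elementary use of the product rule together with the orthonormality of $\bm{u}$.
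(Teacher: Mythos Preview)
Your proof is correct. The reindexing identity $x_j{N\choose\bm{x}}\prod_k v_k^{x_k}=y_l\frac{v_j}{v_l}{N\choose\bm{y}}\prod_k v_k^{y_k}$ checks out in both the off-diagonal and diagonal cases, and once the identity is reduced to $n_iH=\sum_{j,l}u_j^{(i)}u_l^{(i)}p_l\,v_j\,\partial_{v_l}H$ the computation with $H=\prod_m s_m^{n_m}$ and orthonormality is clean. Since the map $f\mapsto\sum_{|\bm{x}|=N}{N\choose\bm{x}}f(\bm{x})\prod_k v_k^{x_k}$ is injective on functions on the simplex $|\bm{x}|=N$, equality of the transforms gives the pointwise recurrence.

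This is a genuinely different route from the paper's. The paper works bilinearly: it evaluates the double transform $\sum_{\bm{n}^+}n_i\,\mathbb{E}\bigl[\prod\phi_k^{X_k}\varphi_k^{Y_k}Q_{\bm{n}}(\bm{X};\bm{u})Q_{\bm{n}}(\bm{Y};\bm{u})\bigr]{N\choose\bm{n}^+}^{-1}$, obtains $NT_i(\bm{\phi})T_i(\bm{\varphi})\bigl(\sum_j p_j\phi_j\varphi_j\bigr)^{N-1}$, and then reads off the coefficients of $n_iQ_{\bm{n}}(\bm{x};\bm{u})$ in the dual basis $\{Q_{\bm{n}}(\bm{y};\bm{u})\}$ via the dual orthogonality (\ref{dualnorm:0}). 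Your argument bypasses the bilinear form entirely by applying the single dual generating function $H(\bm{n},\bm{v},\bm{u})$ and reducing to a first-order PDE. The payoff is a shorter, more self-contained derivation that needs only the explicit product form of $H$ and the orthonormality of $\bm{u}$; the paper's approach, on the other hand, makes the connection to the dual inner-product structure more transparent and fits the pattern used elsewhere (e.g.\ in the proof of Theorem~6).
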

\begin{proof}
A derivation of the recurrence system uses a transform method. Consider 
\begin{eqnarray*}
&&\sum_{\{\bm{n}^+:|\bm{n}^+|=N\}}n_i\mathbb{E}\Big [\prod_{i=1}^d\phi_i^{X_i}\varphi_i^{Y_i}Q_{\bm{n}}(\bm{X};\bm{u})Q_{\bm{n}}(\bm{Y};\bm{u})\Big ]{N\phantom{^+}\choose \bm{n}^+}^{-1}
\nonumber \\
&&~~
= NT_i(\bm{\phi})T_i(\bm{\varphi})\Big [\sum_{j=1}^dp_j\phi_j\varphi_j\Big ]^{N-1}
\end{eqnarray*}
Therefore non-zero terms with $\bm{y}=\bm{x}-\bm{e}_j+\bm{e}_l$ are
\begin{eqnarray}
\sum_{\{\bm{n}^+:|\bm{n}^+|=N\}}n_iQ_{\bm{n}}(\bm{x};\bm{u})Q_{\bm{n}}(\bm{y};\bm{u})
{N\phantom{^+}\choose \bm{n}^+}^{-1}
&=& \frac{
N{N-1\choose \bm{x}-\bm{e}_j}\prod_{k=1}^dp_k^{x_k-\delta_{jk}}p_ju_j^{(i)}p_lu_l^{(i)}
}
{m(\bm{x},\bm{p})m(\bm{y},\bm{p})}
\nonumber \\
&=& \frac{
x_ju_j^{(i)}p_lu_l^{(i)}
}
{m(\bm{y},\bm{p}) }.
\end{eqnarray}
The dual recurrence is therefore (\ref{dualrec:0}).
\end{proof}
The reproducing kernel polynomials
\[
Q_n(\bm{x},\bm{y}) = \sum_{\{\bm{n}:|\bm{n}|=n\}}{N\choose \bm{n}}^{-1}
Q_{\bm{n}}(\bm{x};\bm{u})Q_{\bm{n}}(\bm{y};\bm{u})
\]
are invariant under which set of orthonormal functions $\bm{u}$ is used. They have an explicit form, see \citet{DG2014} and \citet{X2015} for details.

\subsection{An Ehrenfest urn with $d$-types}
A $d$-type Ehrenfest urn has $N$ balls of $d$ colours $\{1,\ldots ,d\}$. At rate 1 a ball is chosen and if it is of type $j$ it is changed to colour $l$ with probability $p_{jl}$, $l=1,\ldots,d$. $\{\bm{X}(t)\}_{t\geq 0}$, with $|\bm{X}(t)|=N$, is the number of balls of the different colours at time $t$, which can be regarded as a $d$-dimensional random walk on $|\bm{x}|=N$. The transition functions have an eigenfunction expansion in the multivariate Krawtchouk polynomials, extending  the case (\ref{Kpolyexp:0}) with two colours.
\begin{Theorem}\label{thm:walk}
Let $\{\bm{X}(t)\}_{t\geq 0}$ be a $d$-dimensional random walk on $\bm{x}$,
$|\bm{x}|=N$, where transitions are made from $\bm{x} \to \bm{x} - \bm{e}_j + \bm{e}_l$ at rate $r( \bm{x}, \bm{x} - \bm{e}_j + \bm{e}_l)=(x_j/N)p_{jl}$. $P$ is a $d\times d$ transition matrix, with stationary distribution $\bm{p}$ such that
\[
p_{jl} = p_l\Big \{1 + \sum_{i=1}^{d-1}\rho_iu_j^{(i)}u_l^{(i)}\Big \}.
\]
Then the transition functions of $\bm{X}(t)$ have an eigenfunction expansion
\begin{eqnarray}
&&p(\bm{x},\bm{y};t) = m(\bm{y},\bm{p})~~~~~~
\nonumber \\
&&\times
\Bigg \{1 + \sum_{\{\bm{n}:0 < |\bm{n}| \leq N\}}e^{-t \sum_{i=1}^{d-1}n_i(1-\rho_i)/N}{N\choose \bm{n}}^{-1}Q_{\bm{n}}(\bm{x};\bm{u})Q_{\bm{n}}(\bm{y};\bm{u})\Bigg \}.~~~~
\label{transitionfns:0}
\end{eqnarray}
\end{Theorem}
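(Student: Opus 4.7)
The plan is to identify the multivariate Krawtchouk polynomials $Q_{\bm{n}}(\bm{x};\bm{u})$ as eigenfunctions of the infinitesimal generator of $\{\bm{X}(t)\}_{t\ge 0}$ and then assemble the spectral expansion (\ref{transitionfns:0}) from the dual orthogonality (\ref{dualMVK:0}). First I would write the generator on functions $f$ on $\{\bm{x}:|\bm{x}|=N\}$ as
\[
\mathcal{L}f(\bm{x})=\sum_{j,l=1}^{d}\frac{x_j}{N}p_{jl}\bigl[f(\bm{x}-\bm{e}_j+\bm{e}_l)-f(\bm{x})\bigr],
\]
and check the side conditions that make the theorem meaningful: since $u^{(0)}\equiv 1$ and $\sum_l p_l u^{(i)}_l=0$ for $i\ge 1$ by (\ref{basis:0}), we get $\sum_l p_{jl}=1$, so $P$ is stochastic, and $p_j p_{jl}=p_l p_{lj}$, so $\bm{p}$ is the reversible stationary distribution of each labelled ball.

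The key step is to apply $\mathcal{L}$ to $Q_{\bm{n}}(\bm{x};\bm{u})$ and invoke the dual recurrence of Theorem \ref{thm:transform}. Substituting the spectral form of $p_{jl}$, the action splits into an $i=0$ piece (where $u^{(0)}_j=u^{(0)}_l=1$) and $d-1$ pieces indexed by $i=1,\ldots,d-1$. Each of these pieces has exactly the shape $\sum_{j,l}x_j u^{(i)}_j u^{(i)}_l p_l Q_{\bm{n}}(\bm{x}-\bm{e}_j+\bm{e}_l;\bm{u})$ appearing on the right of (\ref{dualrec:0}), so each collapses to $n_i Q_{\bm{n}}(\bm{x};\bm{u})$. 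Combining with $\sum_j x_j/N=1$ from the diagonal term, I expect
\[
\mathcal{L}Q_{\bm{n}}(\bm{x};\bm{u})=\frac{n_0}{N}Q_{\bm{n}}(\bm{x};\bm{u})+\frac{1}{N}\sum_{i=1}^{d-1}\rho_i n_i Q_{\bm{n}}(\bm{x};\bm{u})-Q_{\bm{n}}(\bm{x};\bm{u})=-\frac{1}{N}\sum_{i=1}^{d-1}n_i(1-\rho_i)Q_{\bm{n}}(\bm{x};\bm{u}),
\]
using $n_0=N-|\bm{n}|$. This identifies the eigenvalues in the exponent of (\ref{transitionfns:0}).

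Given the eigenfunction identity, the transition semigroup acts as $e^{t\mathcal{L}}Q_{\bm{n}}(\cdot;\bm{u})(\bm{x})=\exp\!\bigl\{-(t/N)\sum_i n_i(1-\rho_i)\bigr\}Q_{\bm{n}}(\bm{x};\bm{u})$. I would then define the right-hand side of (\ref{transitionfns:0}) as a candidate kernel $\widetilde{p}(\bm{x},\bm{y};t)$ and verify two things: first, by linearity in $\bm{y}$ and the eigenvalue identity, $\widetilde{p}$ satisfies the Kolmogorov backward equation $\partial_t\widetilde{p}=\mathcal{L}_{\bm{x}}\widetilde{p}$; second, the initial condition $\widetilde{p}(\bm{x},\bm{y};0)=\delta_{\bm{x}\bm{y}}$ follows from the dual orthogonality relation (\ref{dualMVK:0}), which states precisely that $m(\bm{y},\bm{p})\sum_{\bm{n}}\binom{N}{\bm{n}^+}^{-1}Q_{\bm{n}}(\bm{x};\bm{u})Q_{\bm{n}}(\bm{y};\bm{u})=\delta_{\bm{x}\bm{y}}$. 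Uniqueness of solutions to the backward equation on this finite state space then forces $\widetilde{p}=p$.

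The main obstacle is the bookkeeping in the generator computation: one must carefully use $u^{(0)}\equiv 1$ to capture the ``$1$'' term in $p_{jl}$ as the $i=0$ dual recurrence and correctly identify $n_0=N-|\bm{n}|$ so that the $1\cdot Q_{\bm{n}}$ term cancels against the diagonal piece of $\mathcal{L}$, leaving only the $-\frac{1}{N}\sum_{i\ge 1}n_i(1-\rho_i)$ factor. Everything else is either a direct application of Theorem \ref{thm:transform} or a standard spectral-expansion argument from reversibility.
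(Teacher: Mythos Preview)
Your proposal is correct and follows essentially the same route as the paper: write the generator, invoke the dual recurrence (\ref{dualrec:0}) of Theorem~\ref{thm:transform} to identify $Q_{\bm{n}}$ as an eigenfunction with eigenvalue $-\tfrac{1}{N}\sum_{i}n_i(1-\rho_i)$, and conclude the expansion. The paper compresses your final verification (backward equation, initial condition via dual orthogonality, uniqueness) into the single phrase ``then (\ref{transitionfns:0}) holds immediately,'' but the content is the same.
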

\begin{proof}
 $\{\bm{X}(t)\}_{t\geq 0}$ is a reversible Markov process with stationary distribution $m(\bm{x};\bm{p})$ becuse it satisfies the balance equation
\[
m(\bm{x};\bm{p})r(\bm{x},\bm{x}-\bm{e}_j+\bm{e}_l) =
m(\bm{x}-\bm{e}_j+\bm{e}_l;\bm{p})r(\bm{x}-\bm{e}_j+\bm{e}_l,\bm{x}).
\]
The reversibility is a consequence of assuming that $P$ is a reversible transition matrix.
The generator of the process acting on $f(\bm{x})$ is specified by 
\[
Lf(\bm{x}) = \sum_{j,l}r( \bm{x}, \bm{x} - \bm{e}_j + \bm{e}_l)\big (f( \bm{x} - \bm{e}_j + \bm{e}_l) - f(\bm{x})\big )
\]
so the eigenvalues and eigenvectors $(\lambda_{\bm{n}},g_{\bm{n}}(\bm{x}))$ satisfy
\begin{equation}
Lg_{\bm{n}}(\bm{x}) = -\lambda_{\bm{n}}g_{\bm{n}}(\bm{x}).
\label{generalev:0}
\end{equation}
Now from (\ref{dualrec:0}) 
\begin{equation*}
-\sum_{i=1}^{d-1}\big (n_i(1-\rho_i)/N\big )Q_{\bm{n}}(\bm{x};\bm{u}) = \sum_{j,l=1}^d(x_j/N)p_{jl}Q_{\bm{n}}(\bm{x}-\bm{e}_j+\bm{e}_l;\bm{u})
- Q_{\bm{n}}(\bm{x};\bm{u}) 
\end{equation*}
which is the same as (\ref{generalev:0}), noting that the total rate is 1 away from $\bm{x}$. Then (\ref{transitionfns:0}) holds immediately.
\end{proof}

\subsection{Extensions to the multivariate Krawtchouk polynomials}
It is useful in considering spectral expansions of composition Markov processes to allow the following generalizations of the  multivariate Krawtchouk polynomials.
\begin{itemize}
\item
Allow $d=\infty$ as a possibility and let $\{u^{(j)}\}_{j=0}^\infty$ be a complete orthogonal set of functions on $p_1,p_2,\ldots $. The multinomial distribution is still well defined as
\[
m(\bm{x};\bm{p}) = \frac{N!}{x_1!x_2!\cdots}p_1^{x_1}p_2^{x_2}\cdots
,\> |\bm{x}|=N,
\]
and the generating function for the multivariate Krawtchouk polynomials still holds with $d=\infty$.
\item
When $d=\infty$ take $\{u^{(j)}\}_{j=1}^\infty$ to be orthogonal on a discrete measure $\bm{\pi}$  which is non-negative, but not a probability measure because $\sum_{i=1}^\infty \pi_i = \infty$.
\item
Allow the basis functions $\bm{u}$ to be orthogonal on $\bm{\pi}$, and take the dual functions 
$\{u_i^{(z)}\}_{i=0}^\infty$ to be orthogonal on a continuous distribution. An example that occurs naturally in composition birth and death chains is when $u_i^{(z)} = L_i^{(\alpha)}(z)$, $z \geq 0$, $i=0, 1 ,\ldots$ are the Laguerre polynomials, orthogonal on the density
\[
\frac{z^\alpha}{\Gamma(\alpha+1)}e^{-z},\> z >0.
\]
\end{itemize}
\subsection{Karlin and McGregor spectral theory}
Consider a birth and death process $\{\xi(t)\}_{t\geq 0}$ on $\{-1,0,1,\ldots \}$ with birth and death rates  $ \lambda_i,\mu_i$ from state $i$ and transition probabilities $p_{ij}(t)$.
$-1$ is an absorbing state which can be reached if $\mu_0 > 0$. We assume that the process is non-explosive so only a finite number of events will take place in any finite time interval.
 Define orthogonal polynomials $\{Q_n(z)\}_{n=0}^\infty$ by
\begin{equation}
-zQ_n(z) = -(\lambda_n+\mu_n)Q_n(z) + \lambda_nQ_{n+1}(z) + \mu_nQ_{n-1}(z)
\label{OPSpectrum}
\end{equation}
for $n\in \mathbb{Z}_+$ with $Q_0=1$ and $Q_{-1}=0$.
The polynomials are defined by recursion from (\ref{OPSpectrum}) with $Q_{n+1}$ defined by knowing $Q_n$ and $Q_{n-1}$. If $\mu_0=0$, then $Q_n(0)=1$.
There is a spectral measure $\psi$ with support on the non-negative axis and total mass 1 so that
\begin{equation}
p_{ij}(t) = \pi_j\int_0^\infty e^{-z t}Q_i(z)Q_j(z)\psi(dz),
\label{KMG:0}
\end{equation}
for $i,j=0,1,\ldots$ where 
\[
\pi_j = \frac{\lambda_0\cdots \lambda_{j-1}}{\mu_1\cdots \mu_j}.
\]
If $\mu_0 > 0$ then $\sum_{j=0}^\infty p_{ij}(t) < 1$ because of possible absorption into state $-1$.  If $\mu_0 = 0$ but there is no stationary distribution because $\sum_{j=0}^\infty\pi_j = \infty$ then also possibly  $\sum_{j=0}^\infty p_{ij}(t) < 1$.
Placing $t=0$ shows the orthogonality of the polynomials $\{Q_i(z)\}_{i\geq 0}$ on the measure $\psi$ because $p_{ij}(0) = \delta_{ij}$. $\{\xi(t)\}_{t\geq 0}$ is clearly reversible with respect to $\{\pi_j\}_{j\geq 0}$ when a stationary distribution exists, or before absorption at 0 if it does not exist since
$\pi_ip_{ij}(t) = \pi_jp_{ji}(t)$.
As $t \to \infty$ the limit stationary distribution, if $\mu_0=0$ and $\sum_{k=0}^\infty \pi_k < \infty$, is
\begin{equation}
p_j = \frac{\pi_j}{\sum_{k=0}^\infty \pi_k}=\pi_j\psi(\{0\}).
\label{stationary:0}
\end{equation}
Suppose a stationary distribution exists and there is a discrete spectrum with support $\{\zeta_l\}_{l\geq 0}$, $\zeta_0=0$. 
Then
\begin{eqnarray}
p_{ij}(t) &=& \pi_j\sum_{l=0}^\infty e^{-\zeta_lt}Q_i(\zeta_l)Q_j(\zeta_l)\psi(\{\zeta_l\})
\nonumber\\
&=& 
 p_j\Big \{1 + \sum_{l=1}^\infty e^{-\zeta_lt}Q_i(\zeta_l)Q_j(\zeta_l)\psi(\{\zeta_l\})/\psi(\{0\})\Big \}.
\label{discrete:s0}
\end{eqnarray}
This is an eigenfunction expansion
\begin{equation}
p_{ij}(t) = p_j\Big \{1 + \sum_{l\geq 1}e^{-\zeta_lt}u_i^{(l)}u_j^{(l)}\Big \},\>i,j=0,1\ldots
\end{equation}
where $\bm{u}$ is a set of orthonormal functions on $\bm{p}$ defined by
\[
u^{(l)}_i=Q_i(\zeta_l)\sqrt{\psi(\{\zeta_l\})/\psi(\{0\})},\>i,l=0,1,\ldots
\]
Several well known birth and death processes give rise to classical orthogonal polynomial systems.
In this paper only processes where $\mu_0=0$ are considered so there is no absorbing state at $-1$ and the state space is $\{0,1,\ldots\}$. Classical papers where theory is developed and particular spectral expansions are derived are \citeauthor{KG1957a} 1957a, 1957b, 1958, 1958, 1965.
 \citet{S2000} details the birth and death processes and spectral expansions nicely, from which we summarize.
\begin{itemize}
\item
\emph{The $M/M/\infty$ queue where $\lambda_n=\lambda, \mu_n = n\mu$, $n\geq 0$.} The process has a stationary Poisson distribution 
\[
p_j = e^{-\lambda/\mu}(\lambda/\mu)^j/j!,\>j=0,1,\ldots
\]
The orthogonal polynomials are the Poisson-Charlier polynomials
\[
Q_n(z) = C_n(z/\mu;\lambda/\mu),\>n\geq 0 ,
\]
where $\{C_n(z;\nu)\}_{n=0}^\infty$ has a generating function
\[
\sum_{n=0}^\infty C_n(z;\nu)\frac{w^n}{n!} = e^w(1-w/\nu)^z.
\]
\item
\emph{The linear birth and death process where $\lambda_n=(n+\beta)\lambda$,
 $\mu_n=n\mu$, with $\lambda,\mu,\beta >0$.}  
The process arises from individuals which split at rate $\lambda$, die at rate $\mu$ and immigration of individuals occurs at rate $\lambda\beta$. Then
\[
\pi_j = \frac{\beta_{(j)}}{j!}      \Big (\frac{\lambda}{\mu}\Big )^j,\>j=0,1,\ldots
\]
There are three cases to consider.
\begin{enumerate}
\item
$\lambda < \mu$. The spectral polynomials are related to the Meixner polynomials by
\[
Q_n(z) = M_n\Big (\frac{z}{\mu-\lambda};\beta,\frac{\lambda}{\mu}\Big ),\>n=0,1,\ldots
\]
The polynomials are orthogonal on
\[
\Big (1-\frac{\lambda}{\mu}\Big )^\beta\frac{\beta_{(z)}}{z!}\Big (\frac{\lambda}{\mu}\Big )^z,\>z=0,1,\ldots
\]
 at points $(\mu-\lambda)z$, $z=0,1,\ldots$. The first point of increase is zero corresponding to 
$e^{0t}=1$ in the spectrum.
There is a negative binomial stationary distribution for the process
\[
p_i = \Big (1 - \frac{\lambda}{\mu}\Big )^{\beta}\frac{\beta_{(i)}}{i!}\Big (\frac{\lambda}{\mu}\Big )^i,\>i=0,1,\ldots
\]
The Meixner polynomials have a generating function
\begin{equation}
\sum_{n=0}^\infty M_n(x;a;q)\frac{a_{(n)}}{n!}z^n=(1-q^{-1}z)^x(1-z)^{-x-a}.
\label{Mgf:0}
\end{equation}
\item
$\lambda > \mu$. 
\[
Q_n(z) = \Big (\frac{\lambda}{\mu}\Big )^nM_n(\frac{z}{\lambda-\mu}-\beta;\beta,\frac{\mu}{\lambda}\Big ),\>n=0,1,\ldots
\]
The polynomials are orthogonal on
\[
\Big (1-\frac{\mu}{\lambda}\Big )^\beta\frac{\beta_{(z)}}{z!}\Big (\frac{\mu}{\lambda}\Big )^z,\>z=0,1,\ldots
\]
 at points $(z+\beta)(\lambda-\mu)$, $z=0,1,\ldots$. The first point of increase is $\beta (\lambda-\mu)$, corresponding to a spectral term $e^{-\beta (\lambda-\mu)t}$.
There is not a stationary distribution for the process in this case, with $\sum_{j=0}^\infty \pi_j=\infty$. 
\item
$\lambda=\mu$. The spectral polynomials are related to the Laguerre polynomials by
\[
Q_n(z) = \frac{n!}{\beta_{(n)}}L_n^{(\beta-1)}(z/\lambda),\>n\geq 0.
\]
In this case there is a continuous spectrum and the polynomials are orthogonal on the gamma distribution
\[
\frac{1}{\lambda^\beta\Gamma(\beta)}z^{\beta-1}e^{-z/\beta},\>z >0.
\]
There is no stationary distribution of the process in this case.
The Laguerre polynomials have a generating function 
\[
\sum_{n=0}^\infty L_n^{(\beta -1)}(x)z^n = (1-z)^{-\beta}\exp \{xz/(1-z)\}.
\]
\end{enumerate}
\item
\emph{A two urn model with $\lambda_n=(N-n)(a-n)$, $\mu_n=n\big (b-(N-n)\big )$, $n=0,1,\ldots ,N$, $a,b\geq N$.} The process arises from a model with two urns with $a$ and $b$ balls, with $N$ tagged balls. At an event two balls are chosen at random from the urns and interchanged. The state of the process is the number of tagged balls in the first urn. 
The spectral polynomials are related to the dual Hahn polynomials by
\[
Q_n(z) = R_n(\lambda(z);a,b,N),\>n=0,1,\ldots
\]
where
\[
 R_n(\lambda(z);a,b,N) = \phantom{x}_3F_2(-n,-z,z-a-b-1;-a,-N;1),
\]
orthogonal on 
\[
\frac{
{N-b-1\choose N}N!N_{[z]}a_{[z]}(2z-a-b-1)
}
{
z!b_{[z]}(z-a-b-1)_{(N+1)}
}
\]
with $\lambda(z) = z(z-a-b-1)$. There is a hypergeometric stationary distribution in the process of
\[
p_i = \frac{
{a\choose i}{b\choose N-i}
}
{
{a+b\choose N}
},
\>i=0,1,\ldots ,N.
\] 
\item
\emph{An Ehrenfest urn where $\lambda_n=(N-n)p$, $\mu_n=nq$, $0 \leq n \leq N$, $0 < p < 1$ and $q=1-p$.} 
The spectral polynomials are the Krawtchouk polynomials 
\[
Q_n(z) = K_n(z;N,p),\>0 \leq n \leq N,
\]
orthogonal on the Binomial $(N,p)$ distribution
\[
{N\choose z}p^zq^{N-z},\>z=0,1,\ldots N
\]
which is also the stationary distribution in the process.
\end{itemize}

\subsection{Composition birth and death processes}
Consider $N$ identically distributed birth and death processes $\{\xi_i(t)\}_{t \geq 0}$, $i=1,\ldots N$ each with state space $0,1,\ldots$. It is assumed that there is no absorbing state at $-1$ and $\lambda_0>0$. The transition functions for the labelled processes are
$p_{\bm{i}\bm{j}}(t) := \prod_{k=1}^Np_{i_k,j_k}(t)$.
In composition Markov processes interest is in the unlabelled configuration of $\bm{\xi}(t)$ specified by $\bm{X}(t)$, where
\[
X_k(t) = |\{i_j = k, j=1,\ldots ,N \}|
\]
for $k=0,1\ldots $.
The probability generating function of $\bm{X}(t)$ conditional on $\bm{X}(0)=\bm{x}$ is
\begin{equation}
\mathbb{E}\big [ \prod_{k=1}^ds_k^{X_k(t)}\big ] =
\prod_{i=0}^d\Bigg (\sum_{j=0}^dp_{ij}(t)s_j\Bigg )^{x_i}
\label{pgfxt:0}
\end{equation}
where possibly there are a countable infinity of states with $d=\infty$.
Transitions and rates are, for $j=0,1,\ldots$,
\begin{equation}
\bm{x} \to 
\begin{cases}
\bm{x} + \bm{e}_{j+1} - \bm{e}_j &\mbox{rate~} x_j\lambda_j,\\
\bm{x} + \bm{e}_{j-1} - \bm{e}_{j} &\mbox{rate~} x_j\mu_j.
\label{rates:2}
\end{cases}
\end{equation}
The total rate from $\bm{x}$ is $\sum_{j\geq 0} x_j(\lambda_j+\mu_j)$.
$\{\bm{X}(t)\}_{t\geq 0}$ is reversible with respect to $\widetilde{m}(\bm{x};\bm{\pi}) = {\bm{N}\choose \bm{x}}\prod_{j=1}^d\pi_j^{x_j}$  in the sense that
\begin{eqnarray*}
\widetilde{m}(\bm{x};\bm{\pi})\lambda_jx_j &=& \widetilde{m}(\bm{x}+\bm{e}_j;\bm{\pi})\mu_{j+1}x_{j+1},\>j=0,1,\ldots\\
\widetilde{m}(\bm{x};\bm{\pi})\mu_jx_j &=& \widetilde{m}(\bm{x}-\bm{e}_j;\bm{\pi})\lambda_{j-1}x_{j-1},\>j=1,2\ldots .
\end{eqnarray*}

\begin{Theorem}
If the spectrum is discrete, with support $\{\zeta_l\}_{l\geq 0}$, $\mu_0=0$, $\zeta_0=0$, and a stationary distribution exists, then
\begin{equation}
p(\bm{x},\bm{y};t) = m(\bm{y},\bm{p})\Bigg \{1 + \sum_{\{\bm{n}:0 < |\bm{n}| \leq N\}}e^{-t \sum_{i\geq 1}n_i\zeta_i}{N\choose \bm{n}}^{-1}Q_{\bm{n}}(\bm{x};\bm{u})Q_{\bm{n}}(\bm{y};\bm{u})\Bigg \},
\label{transitionfns:5}
\end{equation}
where $\{Q_{\bm{n}}(\bm{x};\bm{u})\}$ are the multivariate Krawtchouk polynomials with 
\begin{equation}
u^{(l)}_i = Q_i(\zeta_l)\sqrt{\psi(\zeta_l)/\psi(0)},\>i,l=0,1,\ldots.
\label{uQ:0}
\end{equation}
The indexing in elements of $\bm{x},\bm{y}$ now begins at $0$.
If the spectrum is discrete, with support   $\{\zeta_l\}_{l\geq 0}$, $\mu_0=0$  then
\begin{equation}
p(\bm{x},\bm{y};t) = \widetilde{m}(\bm{y};\bm{\pi})\sum_{\{\bm{n}:0 \leq |\bm{n}| \leq N\}}e^{-t \sum_{i\geq 0}n_i\zeta_i}{N\choose \bm{n}}^{-1}Q_{\bm{n}}(\bm{x};\bm{u})Q_{\bm{n}}(\bm{y};\bm{u}),
\label{transitionfns:6}
\end{equation}
where 
 $\{Q_{\bm{n}}(\bm{x};\bm{u})\}$ are the multivariate Krawtchouk polynomials with 
\begin{equation}
u^{(l)}_i = Q_i(\zeta_l)\sqrt{\psi(\zeta_l)},\>i,l=0,1,\ldots.
\label{uQ:1}
\end{equation}
In this case $\zeta_0 > 0$,  $u^{(0)}$ is not identically 1, and 
\[
\sum_{i\geq 0}u^{(k)}_iu^{(l)}_i\pi_i = \delta_{kl},\>k,l=0,1,\ldots.
\]
This covers the case when a stationary distribution does exist and also when a stationary distribution does not exist because $\sum_{k=0}^\infty\pi_k = \infty$.
\end{Theorem}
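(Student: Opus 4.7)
The plan is to extend the two-colour Ehrenfest computation (\ref{Kpolyexp:0}) by replacing Bernoulli single-particle transitions with the Karlin--McGregor spectral expansion for a general birth and death process, and then symmetrizing over all labellings. Because the $N$ individual processes $\{\xi_i(t)\}$ are independent and exchangeable, the unlabelled transition function from $\bm{x}$ to $\bm{y}$ is obtained by fixing representatives $\bm{\eta},\bm{\xi}$ of $\bm{x},\bm{y}$ and summing $\prod_{k=1}^N p_{\eta_k,\xi_{\sigma(k)}}(t)$ over all rearrangements of the target labels, as in the two-state derivation. This is the labelled identity that motivates (\ref{pgfxt:0}).

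Next I substitute the Karlin--McGregor representation into each of the $N$ factors. In the first case (stationary distribution exists and $\zeta_0=0$), (\ref{discrete:s0}) gives $p_{ij}(t) = p_j\{1+\sum_{l\geq 1} e^{-\zeta_l t} u^{(l)}_i u^{(l)}_j\}$ with $\bm{u}$ as in (\ref{uQ:0}); in the second case the full sum in (\ref{KMG:0}) is used with $\bm{u}$ as in (\ref{uQ:1}). Multiplying out the $N$ factors produces a sum over multi-indices $\bm{l}=(l_1,\ldots,l_N)$ in which each term carries the exponential factor $\exp(-t\sum_k \zeta_{l_k}) = \exp(-t\sum_i n_i \zeta_i)$, where $n_i=|\{k:l_k=i\}|$. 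Grouping by $\bm{n}=(n_0,n_1,\ldots)$ converts the sum over $\bm{l}$ and over $\sigma\in S_N$ into a double symmetrization: one over the input labels $\eta$ and one over the output labels $\xi$.

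The key identification is that each symmetrized product is exactly a multivariate Krawtchouk polynomial. By the generating-function definition (\ref{main_gf}), the coefficient of $\prod_l w_l^{n_l}$ in $\prod_{k=1}^N(1+\sum_l w_l u^{(l)}_{\eta_k})$ gives a symmetric function of the $\eta_k$'s whose value depends only on $\bm{x}$ and equals $Q_{\bm{n}}(\bm{x};\bm{u})$; similarly for $\bm{y}$. Collecting the combinatorial constants, the multinomial coefficients produced by counting rearrangements within each class of $\bm{l}$ and by the symmetrization over $\sigma$ combine with the normalization (\ref{normalizing}) to produce precisely the factor ${N\choose\bm{n}}^{-1}$ in (\ref{transitionfns:5}). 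The prefactor $\prod_k p_{\xi_{\sigma(k)}}$ (or $\prod_k \pi_{\xi_{\sigma(k)}}$ in the second case) is $\sigma$-invariant and yields $m(\bm{y};\bm{p})$ or $\widetilde m(\bm{y};\bm{\pi})$, respectively, giving (\ref{transitionfns:5}) and (\ref{transitionfns:6}).

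The main obstacle is the second case, where no stationary distribution exists: $\bm{\pi}$ is not a probability measure, $u^{(0)}$ is not identically $1$, and $d=\infty$. For this I would invoke the extensions in Section 3.2, which permit $\bm{u}$ to be orthogonal on a possibly non-normalisable measure $\bm{\pi}$, so that the identification of symmetrized products with $Q_{\bm{n}}(\bm{x};\bm{u})$ goes through unchanged. The delicate points are (i) justifying the termwise expansion of an infinite product of infinite spectral series (handled using the reversibility $\pi_i p_{ij}(t)=\pi_j p_{ji}(t)$ and the fact that $p_{ij}(t)\leq 1$, so the $N$-fold sum is dominated by the sum of the individual $L^2(\bm{\pi})$-convergent series), and (ii) ensuring that the orthogonality (\ref{normalizing}) still yields the correct normalising factor when the basis $\bm{u}$ is orthogonal on $\bm{\pi}$ rather than on a probability vector, which follows from the polynomial identity (\ref{main_gf}) since this identity is purely algebraic in $\bm{u}$.
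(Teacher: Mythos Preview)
Your proposal is correct and follows essentially the same approach as the paper: substitute the single-particle Karlin--McGregor expansion (\ref{discrete:s0}) into each of the $N$ independent factors of the composition process, expand the product, and identify the resulting symmetrized sums with multivariate Krawtchouk polynomials via their generating-function definition (\ref{main_gf}). The paper's own proof is terser---it simply points to the product p.g.f.\ (\ref{pgfxt:0}) and writes out the single-particle expansion, leaving the identification implicit---whereas you spell out the symmetrization explicitly by mirroring the Ehrenfest calculation (\ref{Kpolyexp:0}); but the underlying argument is the same. Your additional remarks on convergence in the non-stationary case are more careful than anything the paper actually does, which treats that case formally via the extensions in Section~\ref{section:MVK}.2 without addressing analytic justification.
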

\begin{proof}
The probabilistic structure of $\{\bm{X}(t)\}_{t\geq 0}$ with probability generating function (\ref{pgfxt:0}) implies that the multivariate Krawtchouk polynomials are the eigenfunctions of the transition distribution. Indexing in $\bm{X}(t)$ is from 0, rather than the usual indexing from 1. From the Karlin and McGregor spectral expansion (\ref{KMG:0})
\begin{eqnarray}
p_{ij}(t) &=& \pi_j\Big \{ \sum_{k\geq 0}e^{-t\zeta_k}Q_i(\zeta_k)Q_j(\zeta_k)\psi(\{\zeta_k\})\Big \}
\nonumber \\
p_{ij}(t) &=& \psi(\{0\})\pi_j\Big \{1 + \sum_{k\geq 1}e^{-t\zeta_k}Q_i(\zeta_k)Q_j(\zeta_k)\psi(\{\zeta_k\})/\psi(\{0\})\Big \}
\nonumber \\
&=& p_j\Big \{1 + \sum_{k\geq 1}e^{-t\zeta_k}u_i^{(k)}u_j^{(k)}\Big \},
\label{cmc:0}
\end{eqnarray}
where $\{u_i^{(k)}\}$ is defined in (\ref{uQ:0}) and satisfies
\[
\sum_{i\geq 0}u_i^{(k)}u_i^{(l)}p_i = \delta_{kl},\>\>k,l\geq 0.
\]
The second case (\ref{transitionfns:6}) follows similarly.  The multivariate Krawtchouk polynomials then have a generating function
\begin{equation}
G(\bm{x},\bm{w}, \bm{u}) 
=
 \prod_{j\geq 0}\Big (u^{(0)}_j + \sum_{l\geq 1}^{d-1}w_lu_j^{(l)}\Big )^{x_j}.
\label{main_gf:1}
\end{equation}
\end{proof}
The transition probability expansion (\ref{transitionfns:6}) can be written in a Karlin and McGregor spectral expansion form where the dual polynomials are important.
 Denote $\widetilde{u}_i^{(l)}=Q_i(\xi_l)$, $i,l=0,1,\ldots$;
${\cal Q}_{\bm{x}}(\bm{\nu};\widetilde{u})={N\choose \bm{\nu}}^{-1}Q_{\bm{\nu}}(\bm{x};\widetilde{\bm{u}})$; and a multinomial spectral measure (which is a probability measure)
\begin{equation}
\widetilde{m}(\bm{\nu};\psi) = {N\choose \bm{\nu}}\psi(\zeta_0)^{\nu_0}\psi(\zeta_1)^{\nu_1}\cdots,\>\nu_0+\nu_1+\cdots = N
\label{phispect}
\end{equation}
Then (\ref{transitionfns:6}) can be expressed as a spectral expansion
\begin{equation}
p(\bm{x},\bm{y}:t) 
= \widetilde{m}(\bm{y};\bm{\pi})
\sum_{\{\bm{\nu};0 \leq |\bm{\nu}| \leq N\}}e^{-t \sum_{i\geq 0}\nu_i\zeta_i}
{\cal Q}_{\bm{x}}(\bm{\nu};\widetilde{\bm{u}}){\cal Q}_{\bm{y}}(\bm{\nu};\widetilde{\bm{u}})
\widetilde{m}(\bm{\nu};\psi).
\label{transitionfns:10}
\end{equation}
The generating function of the dual polynomials 
\begin{eqnarray}
H(\bm{n},\bm{v},\widetilde{\bm{u}}) &=& \sum_{\{\bm{x}:|\bm{x}|=N\}}
{N\phantom{^+}\choose \bm{n}^+}^{-1}{N\choose \bm{x}}Q_{\bm{n}}(\bm{x},\widetilde{\bm{u}})\prod_{i\geq 0}v_i^{x_i}
\nonumber \\
&=& \prod_{l\geq 0}\Bigg (v_0 + \sum_{j\geq 1}Q_j(\zeta_l)v_j\Bigg )^{n_l}
\nonumber \\
&=& \prod_{k=1}^N\Bigg (v_0 + \sum_{j\geq 1}Q_j(Z_k)v_j\Bigg )
\label{dual:20}
\end{eqnarray}
where in this generating function $\bm{n}(\bm{Z})$ is regarded as a random variable by taking
\begin{equation}
n_l(\bm{Z}) = |\{Z_k:Z_k = \zeta_l, k=1,\ldots ,N\}|.
\label{nz:0}
\end{equation}
$\{Z_k\}_{k=1}^N$ are independent identically distributed random variables with probability measure $\psi$.  Without loss of take generality $v_0=1$ in (\ref{dual:20}) and consider coefficients of $\prod_{i\geq 1}v_i^{x_i}$, indexing the dual polynomial by $(x_1,x_2,\ldots)$ with $x_1+x_2+\cdots \leq N$. Note the scaling that the dual polynomials is 1 when $x_i=0$, $i \geq 1$.
\begin{Theorem}
Define
\[
{\cal N}_j = \sum_{k=1}^NQ_j(Z_k) = \sum_{l\geq 0}n_lQ_j(\zeta_l),\>j\geq 1.
\]
${N\phantom{^+}\choose \bm{n}^+}^{-1}Q_{\bm{n}}(\bm{x},\widetilde{\bm{u}})$
 is a polynomial of degree $x_1+x_2+\cdots$ in $\{{\cal N}_j\}_{j\geq 1}$ 
whose only term of maximal degree  is $\prod_{j\geq 1}{\cal N}_j^{x_j}$.
The total degree of $\bm{Z}$ in the dual polynomials indexed by $(x_1,x_2,\ldots)$ is $\sum_{j\geq 1}jx_j$ with a single leading term of this degree.
\end{Theorem}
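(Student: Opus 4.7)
The plan is to recognize (\ref{dual:20}), with $v_0=1$, as an instance of the primal generating function (\ref{main_gf}) and then invoke Theorem \ref{thm:3}. Setting $v_0=1$ gives
\[
H(\bm{n},\bm{v},\widetilde{\bm{u}}) = \prod_{k=1}^N\Big(1+\sum_{j\geq 1}Q_j(Z_k)v_j\Big),
\]
which matches (\ref{main_gf}) after the natural reinterpretation: the $N$ ``trials'' are indexed by $k=1,\ldots,N$; the outcome of the $k$th trial is the point $Z_k$; the label index is $j\geq 1$; and the basis functions are $\widetilde{u}^{(j)}_{\zeta_l}=Q_j(\zeta_l)$. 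Extracting the coefficient of $\prod_{j\geq 1}v_j^{x_j}$ then identifies ${N\phantom{^+}\choose\bm{n}^+}^{-1}Q_{\bm{n}}(\bm{x},\widetilde{\bm{u}})$ as a symmetric polynomial in $\bm{Z}$ (equivalently in $\bm{n}$ via (\ref{nz:0})).

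Applying Theorem \ref{thm:3} in this specialization, the sums playing the role of $U_l$ are precisely $\sum_{k=1}^N Q_j(Z_k)={\cal N}_j$. Theorem \ref{thm:3} then yields the first claim directly: a polynomial of total degree $\sum_{j\geq 1}x_j$ in $\{{\cal N}_j\}_{j\geq 1}$, with unique leading monomial $\prod_{j\geq 1}{\cal N}_j^{x_j}$.

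For the degree-in-$\bm{Z}$ statement, $Q_j$ has degree exactly $j$ (immediate from the three-term recurrence (\ref{OPSpectrum}) with $\lambda_n>0$), so ${\cal N}_j$ is symmetric of exact $\bm{Z}$-degree $j$, and $\prod_j{\cal N}_j^{x_j}$ has $\bm{Z}$-degree $\sum jx_j$. Direct expansion of the generating function writes the dual polynomial as a sum over disjoint subset assignments $\{S_j\}\subseteq\{1,\ldots,N\}$ with $|S_j|=x_j$ of $\prod_{j\geq 1}\prod_{k\in S_j}Q_j(Z_k)$, and each summand has $\bm{Z}$-degree exactly $\sum jx_j$, pinning down the total degree. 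The top-$\bm{Z}$-degree part is then the image of the unique ${\cal N}$-leading monomial $\prod_j{\cal N}_j^{x_j}$, giving the single leading term of this degree.

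The main point to verify is that Theorem \ref{thm:3} remains valid in the countable setting ($d=\infty$) of Section 3.2. This is not a serious obstacle: the proof of Theorem \ref{thm:3} is purely algebraic (a transform computation on the generating function), does not use orthogonality of $\bm{u}$, and for any fixed $\bm{n}$ with $|\bm{n}|\leq N$ involves only finitely many basis functions, so the argument transfers verbatim.
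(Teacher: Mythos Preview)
Your proposal is correct and follows essentially the same route as the paper. The paper's proof invokes Theorem~\ref{thm:dual} (whose own proof is just the duality of Theorem~\ref{thm:2} combined with Theorem~\ref{thm:3}) and then notes that ${\cal N}_j$ has $\bm{Z}$-degree $j$; you unwind this chain by directly matching the dual generating function (\ref{dual:20}) with $v_0=1$ to the primal form (\ref{main_gf}) and applying Theorem~\ref{thm:3}, and your subset-expansion remark for the $\bm{Z}$-degree is a slightly more explicit version of the paper's one-line observation.
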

\begin{proof}
The proof of the first statement follows from Theorem \ref{thm:dual}. The proof of the second statement is immediate by knowing that ${\cal N}_j$ is of degree $j$ in $\bm{Z}$.
\end{proof}
The third case of linear birth and death processes composition Markov chains is interesting as having a continuous spectral measure which is a product measure of $N$ gamma distribution measures. The spectral polynomials are well defined by a generating function as coefficients
 of $\prod_{j=1}^\infty v_j^{x_j}$ in
\[
\prod_{k=1}^N \Bigg ( 1 + \sum_{j\geq 1}Q_j(Z_k)v_j\Bigg ),
\]
however elements of $\{Z_k\}_{k=1}^N$ are distinct, being continuous random variables, and the dual of the dual system is products of dual Laguerre polynomials which are not grouped to an index $\bm{n}$ as when there is a discrete spectrum.

The polynomials in the Meixner class (\ref{mclass:0}) are additive in the sense that if $\{Q_m^N(|\bm{z}|)\}$ are the orthogonal polynomials on the distribution of $|\bm{Z}|$ then
the generating function for these polynomials is 
\begin{equation}
G^N(v,|\bm{z}|) = h(v)^Ne^{|\bm{z}|u(v)}=\sum_{m=0}^\infty Q^N_m(|\bm{z}|)v^m/m!
\label{mclass:1}
\end{equation}
and
\begin{equation}
Q^N_m(|\bm{z}|) = \sum_{\{\bm{m}:|\bm{m}|=m\}}{m\choose \bm{m}}\prod_{j=1}^NQ_{m_j}(z_j).
\label{mclass:4}
\end{equation}
This additivity implies an interesting identity. 
\begin{Theorem}
The dual multivariate Krawtchouk polynomials with generating function (\ref{dual:20}) satisfy the identity
\begin{equation}
{N\choose \bm{n}}^{-1}\sum_{\{\bm{x}:\sum_{j=1}^\infty jx_j = m\}}
{N\choose \bm{x}}\frac{m!}{\prod_{j=1}^\infty {j!}^{x_j} }Q_{\bm{n}}(\bm{x},\widetilde{\bm{n}}) = Q^N_m(|\bm{Z}|),
\label{mclass:5}
\end{equation}
where $\bm{x}= (x_1,x_2,\ldots)$.
In this equation $\bm{n}=\bm{n}(\bm{Z})$ is regarded as a random variable in the sense of (\ref{nz:0}).
\end{Theorem}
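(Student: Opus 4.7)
The plan is to prove the identity by a single generating-function substitution that collapses the dual generating function into the Meixner-class additivity identity (\ref{mclass:1})--(\ref{mclass:4}).

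I start from the product form of the dual generating function (\ref{dual:20}) with $v_0=1$, namely
\[
\sum_{\{\bm{x}:x_1+x_2+\cdots\leq N\}}
{N\choose \bm{n}}^{-1}{N\choose \bm{x}}Q_{\bm{n}}(\bm{x},\widetilde{\bm{u}})\prod_{i\geq 1}v_i^{x_i}
=\prod_{k=1}^N\Bigg(1+\sum_{j\geq 1}Q_j(Z_k)v_j\Bigg),
\]
where $\bm{n}=\bm{n}(\bm{Z})$ as in (\ref{nz:0}). Next I substitute $v_j=t^j/j!$ for all $j\geq 1$. On the right-hand side this recombines each factor into $\sum_{j\geq 0}Q_j(Z_k)t^j/j!$, which by the Meixner-class hypothesis equals $G(t,Z_k)=h(t)e^{Z_k u(t)}$. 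Multiplying the $N$ factors yields
\[
\prod_{k=1}^N h(t)e^{Z_k u(t)} = h(t)^N e^{|\bm{Z}|u(t)}=\sum_{m=0}^\infty Q^N_m(|\bm{Z}|)\frac{t^m}{m!},
\]
by the generating function (\ref{mclass:1}) for the Meixner-class polynomials of index $N$ evaluated at $|\bm{Z}|$.

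On the left-hand side the same substitution produces
\[
\sum_{\bm{x}}{N\choose \bm{n}}^{-1}{N\choose \bm{x}}Q_{\bm{n}}(\bm{x},\widetilde{\bm{u}})\frac{t^{\sum_{j\geq 1}jx_j}}{\prod_{j\geq 1}(j!)^{x_j}},
\]
and grouping terms by the value $m=\sum_{j\geq 1}jx_j$ turns the coefficient of $t^m$ into exactly the bracketed sum on the left-hand side of (\ref{mclass:5}), divided by $m!$. Equating coefficients of $t^m$ on the two sides and multiplying by $m!$ gives (\ref{mclass:5}).

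There is essentially no obstacle beyond verifying that the substitution $v_j=t^j/j!$ is admissible at the level of formal power series (the inner sums $\sum_{j\geq 1}Q_j(Z_k)v_j$ become $\sum_{j\geq 1}Q_j(Z_k)t^j/j!$ with finite coefficient of each $t^m$, so no convergence issue arises) and that the identification of the Meixner-class factors requires the normalization $Q_0\equiv 1$ built into (\ref{OPSpectrum}). The only delicate bookkeeping is the $v_0=1$ convention adopted just before the theorem, which is what makes the outer sum range over $(x_1,x_2,\ldots)$ with $x_1+x_2+\cdots\leq N$ rather than over $|\bm{x}|=N$; once this is in place, the proof is a two-line generating-function calculation.
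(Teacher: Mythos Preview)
Your proof is correct and is essentially identical to the paper's own argument: the paper also substitutes $v_j=v^j/j!$ in the dual generating function (\ref{dual:20}), recognizes each factor as the Meixner-class generating function $h(v)e^{Z_k u(v)}$, multiplies to get $h(v)^Ne^{|\bm{Z}|u(v)}$, and equates coefficients of $v^m$. Your remarks on the $v_0=1$ convention and the formal-power-series admissibility of the substitution are accurate elaborations of points the paper leaves implicit.
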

\begin{proof}
Set $v_j=v^j/j!$, $j=0,1,\ldots $ in (\ref{dual:20}). Then 
\begin{eqnarray*}
\sum_{\{\bm{x}:|\bm{x}|=N\}}
{N\phantom{^+}\choose \bm{n}^+}^{-1}{N\choose \bm{x}}Q_{\bm{n}}(\bm{x},\widetilde{\bm{u}})\frac{v^{\sum_{j=1}^\infty  jx_j}}{\prod_{j=1}^\infty{j!}^{x_j}}
&=&\prod_{k=1}^N\Bigg (\sum_{j\geq 0}Q_j(Z_k)v^j/j!\Bigg )
\nonumber \\
&=&h(v)^Ne^{|\bm{Z}|u(v)}
\nonumber \\
&=&\sum_{m=0}^\infty Q_m^N(|\bm{Z}|)v^m/m!
\end{eqnarray*}
The theorem then follows by equating coefficients of $v^m$ on both sides of the generating function.
\end{proof}

\end{document}